\newcommand{\lvt}{\left|\kern-1.35pt\left|\kern-1.3pt\left|}
\newcommand{\rvt}{\right|\kern-1.3pt\right|\kern-1.35pt\right|}
\newtheorem{thm}{Theorem}[section]
\newtheorem{cor}[thm]{Corollary}
\newtheorem{lem}[thm]{Lemma}
\newtheorem{prop}[thm]{Proposition}
\newtheorem{exam}[thm]{Example}
\newtheorem{defn}[thm]{Definition}
\newtheorem{assumption}[thm]{Assumption}
\theoremstyle{remark}
\newtheorem{rem}{Remark}[section]
\numberwithin{equation}{section}
 \def\x{{\mathbf{x}}}
 \def\y{{\mathbf{y}}}
 \def\v{{\mathbf{v}}}
 \def\u{{\mathbf{u}}}
 \def\w{{\mathbf{w}}}
 \def\a{{\alpha}}
 \def\NN{{\mathbb N}}
 \def\RR{{\mathbb R}}
\def\lla{\langle{\kern-2.5pt}\langle}
\def\rra{\rangle{\kern-2.5pt}\rangle}
\DeclareMathOperator{\sgn}{sgn}
\newcommand{\rev}[1]{\textcolor{black}{#1}}
\begin{document}

\title[Least Chebyshev polynomials]{Least multivariate Chebyshev polynomials on diagonally-determined sets}

\author{M. Dressler}
\address{Mareike Dressler, School of Mathematics and Statistics, University of New South Wales, Sydney, NSW 2052, Australia.}
\email{m.dressler@unsw.edu.au}

\author{S. Foucart}
\address{Simon Foucart, Department of Mathematics, Texas A\&M University, United States.}
\email{foucart@tamu.edu}

\author{M.  Joldes}
\address{Mioara  Joldes, LAAS-CNRS, France.}
\email{joldes@laas.fr}

\author{E. de Klerk}
\address{Etienne de Klerk, Department of Econometrics and Operations Research, Tilburg University, The Netherlands.}
\email{E.deKlerk@tilburguniversity.edu}

\author{J.~B. Lasserre}
\address{Jean-Bernard Lasserre, LAAS-CNRS and Toulouse School of Economics, France.}
\email{lasserre@laas.fr}

\author{Y. Xu}
\address{Yuan Xu, Department of Mathematics, University of Oregon, United States.}
\email{yuan@uoregon.edu}

\subjclass[2020]{Primary: 41A10, 41A63, 90C22.}
\keywords{ Chebyshev polynomial, Chebyshev approximation, extremal signature, semidefinite programming, Lasserre hierarchy}

\begin{abstract}
We consider a new multivariate generalization of the classical monic (univariate) Chebyshev polynomial that minimizes the uniform norm
on the interval $[-1,1]$. Let $\Pi^*_n$ be the subset of polynomials of degree at most $n$ in $d$ variables, whose homogeneous part of degree $n$ has  coefficients summing up to $1$. The problem is determining a polynomial in  $\Pi^*_n$
with the smallest uniform norm on a set $\Omega \subset \mathbb{R}^d$, which we call a least Chebyshev polynomial (associated with $\Omega$). Our main result
solves the problem for $\Omega$ belonging to a non-trivial class of sets that we call {\em diagonally-determined}, and establishes the remarkable result that a least Chebyshev 
polynomial can be given via the classical, univariate, Chebyshev polynomial. In particular, the solution
can be independent of the dimension. Diagonally-determined sets include centered balls in $\mathbb{R}^d$ in any norm, but can be non-convex and
even non-simply connected. We also introduce a computational procedure, based on semidefinite programming hierarchies, to detect if a given semi-algebraic set is diagonally-determined.
\end{abstract}

\maketitle

\section{Introduction}

Among its numerous properties, the Chebyshev polynomial $T_n(x) =\cos (n \arccos x)$ provides a solution
for the best approximation to the monomial $x^n$ on the interval $[-1,1]$ in the uniform norm. More precisely,
the polynomial
$$
   q_{n}^*(x) = x^n - 2^{1-n} T_n(x)
$$
of degree $n-1$ ($n \in \mathbb{N}$) is the best polynomial of approximation to $x^n$ on $[-1,1]$; that is
\begin{equation} \label{intro-def-uni}
    q_n^*\,=\,\arg\min_{q\in\Pi_{n-1}}\,\sup_{x\in [-1,1]}\vert x^n-q(x)\vert\,,
\end{equation}
with $\Pi_{n-1}$ being the vector space of univariate polynomials of degree at most $n-1$.
In other words, the monic Chebyshev polynomial $x^n-q^*_n$ is the {\it least polynomial} in the sense that it
has the least uniform norm among all monic polynomials of degree $n$.

There have been multiple extensions of Chebyshev polynomials to multivariate settings from different angles. From the point of view of approximation, an immediate generalization is finding the best approximation to monomials
\cite{AV1,AV2,G,BHN, MoaPeh, NX,R,Sloss,X04,X05}.
To this end we replace the interval $[-1,1]$ by a general set $\Omega \subset \mathbb{R}^d$, for $d>1$. Letting $\a\in\NN^d$ and $\vert\a\vert \coloneqq \sum_{i=1}^d \alpha_i =n$, we consider the problem
\begin{equation} \label{def-intro-multi-1}
\min_{q\in\Pi^d_{n-1}}\,\sup_{\x\in \Omega}\,\vert \x^{\a}-q(\x)\vert\,,
\end{equation}
where $\Pi^d_{n-1}$ denotes the space of real polynomials of total degree at most $n-1$ in the variables $\x=(x_1,\ldots,x_d)$, and where we define the monomial $\x^\alpha \coloneqq x_1^{\alpha_1}\ldots x_d^{\alpha_d}$. Since we will deal with monomials of degree $n$ throughout the paper, we introduce the notation
\[
\NN^d_n \coloneqq \left\{ \alpha \in (\NN_0)^d \; : \; |\alpha | = n\right\},
\]
where $\NN_0 = \NN \cup \{0\}$ denotes the set of nonnegative integers. Thus $\NN^d_n$ indexes all monomials of degree {\em exactly} $n$ in $d$ variables.

Problem \eqref{def-intro-multi-1} can be regarded as a natural multivariate generalization of \eqref{intro-def-uni}, where $\Omega\subset \RR^d$
is a subset of $\RR^d$. While the interval $[-1,1]$ is a prototype of a compact connected set of the real line,
 there is no `prototype' in higher dimensions for such a set. In the literature, this problem has been
studied primarily on a few special regular domains. In two variables, the problem \eqref{def-intro-multi-1} is solved
for the square, the disk, and the isosceles right triangle \cite{G, BHN, NX,R, Sloss}.
While the solution on the square can be extended to the cube for $d > 2$, the problem is solved only for a few cases,
mostly monomials of lower degrees, on the ball and the simplex \cite{AV1, AV2, X04}.
 Moreover, the existing examples indicate an increasing complexity, so much so that it does
not appear possible to find an analytic solution even for these regular domains.

Recently, in \cite{caltex-1} we have proposed to investigate \eqref{def-intro-multi-1} for various choices of $\Omega\subset\RR^d$
and $\a\in \NN^d_n$ by combining analytical tools with numerical tools from optimization (and notably the so-called
moment-SOS hierarchy \cite{lass-siopt-01}). During this study, we have encountered an optimization problem that has initiated a change of view: Namely, instead of studying the best polynomial of approximation to monomials, we can study the least polynomial instead.
While the two concepts are identical in one variable, they can be quite different in higher dimensions,
as seen from the definition below.

We emphasize that -- even though our paper deals with the characterization of least polynomials on special sets -- it is intrinsically connected to constructive approximation and numerical optimization due to the close links with the work in \cite{caltex-1} and \cite{lass-siopt-01} respectively.

\begin{defn}
\label{def:extremal_problem}
Let $\Pi_n^d$ denote the space of polynomials of total degree at most $n$ in $d$ variables,
and $\Pi^*_n$ the subset of $\Pi_n^d$ that consists of polynomials of the form % in the variables $\x=(x_1,\ldots,x_d)$ of the form
\[
\x\mapsto P(\x)\,\coloneqq\,\sum_{\a \in \NN^d_n}a_\a\,\x^\a+Q(\x) \, \text{ with }\quad \sum_{\a \in \NN^d_n}a_\a=1\,\text{and }\:Q\in\Pi^d_{n-1}.
\]

Let $\Omega$ be a set in $\RR^d$. We consider the optimization problem
\begin{equation}
    \label{def-intro-multi-2}
\inf_{P\in\Pi^*_n}\, \|P\|_\Omega, \quad \hbox{where} \quad \|P\|_\Omega \coloneqq \sup_{\x\in \Omega}\,\vert P(\x)\vert\,.
\end{equation}
If it exists, we call a minimizer $P^*\in\Pi^*_n$ of \eqref{def-intro-multi-2} a \emph{least Chebyshev polynomial} of degree $n$ on the set $\Omega$.
\end{defn}

For $d =1$, there is only one monomial of degree $n$. In the case $d > 1$, every element of $\Pi^*_n$ is
`monic' and the monomial $\x^\a$ in \eqref{def-intro-multi-1} is only one among many possible choices in $\Pi^*_n$.
However, rather than approximating a fixed monomial  by polynomials of lower degree, the problem
\eqref{def-intro-multi-2} requires finding a polynomial that has the least norm among all polynomials in $\Pi_n^*$.
As far as we are aware, this problem has not been considered in the literature.

\subsection*{Contribution}
The main purpose of this paper is to report our findings on the optimization problem~\eqref{def-intro-multi-2}. It turns out,
much to our surprise, that the problem~\eqref{def-intro-multi-2} can be solved analytically for a fairly general family of sets
$\Omega$ in $\RR^d$ for all $d \ge 2$.
This family of sets will be referred to as {\em diagonally-determined}.

\subsection*{Organization of this paper}
The paper is organized as follows. We start by defining and describing several examples of diagonally-determined
 sets in Section \ref{sec:examples}. Our main results are presented in Section~\ref{subsec:main-diagonal-supported},
  where we describe the least polynomial for a diagonally-determined set; see Theorem \ref{thm:supported}. %In section~\ref{subsec:main-further-result}, we consider this problem on a specific smaller subspace.
  We discuss  the dual problem of \eqref{def-intro-multi-2} in Section~\ref{sec:dual-framework-signatures} and show that it too has a
  closed-form solution in the case of diagonally-determined sets; see Theorem \ref{thm:sinature_diag_supp}. We also rephrase this result for the dual problem in the framework of {\em extremal signatures}.
\rev{Finally, we consider the problem of deciding whether a given semi-algebraic set is diagonally determined in Section \ref{sec:detection}, and propose an algorithmic approach based on semidefinite programming hierarchies.
We conclude the paper with a discussion of related problems and open questions in Section \ref{sec:conclusions}.}

\section{Diagonally-determined sets}
\label{sec:examples}
We define the {\em diagonal} of a set $\Omega \subset \mathbb{R}^d$ as the set
${\rm  diag}(\Omega)\coloneqq \{ t \in \RR: t \mathbf{1} \in \Omega \}$, where $\mathbf{1}$ denotes the all-ones vector in $\mathbb{R}^d$.
For two vectors $\x,\y\in\RR^d$, we use $\langle \x,\y\rangle$ for their standard inner product.

\begin{defn}
\label{def:diag_concentration}
We call a set $\Omega \subset \mathbb{R}^d$ {\em diagonally-determined} if the following two conditions hold:
\begin{enumerate}
\item The diagonal of $\Omega$ is an interval, say ${\rm  diag}(\Omega) = [a,b]$, and
\item there exists $\v \in \RR^d$ such that $\langle \v, \mathbf{1} \rangle = 1$ and $\langle \v,\x \rangle \in [a,b]$ for all $\x \in \Omega$.
\end{enumerate}
\end{defn}

Importantly, Definition \ref{def:diag_concentration} covers sets that could be non-convex, non-compact, and not simply connected.
Figure~\ref{fig-cheby-1}
gives an  example of a non-convex  diagonally-determined set in $\RR^2$.

%tikz picture by Etienne
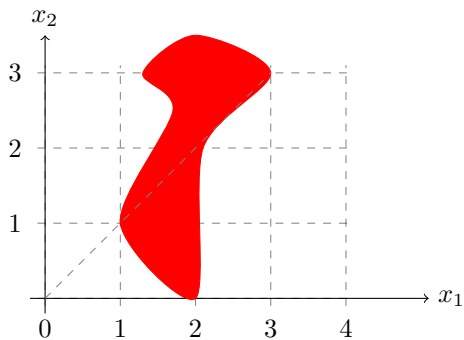
\begin{figure}[ht] \label{fig:1}
\begin{center}
\begin{tikzpicture}[domain=0:5]
\draw[very thin,color=gray,dashed] (-0.1,-0.1) grid (4.1,3.1);
\draw[->] (-0.2,0) -- (5.1,0) node[right] {$x_1$};
\draw[->] (0,-0.2) -- (0,3.5) node[above] {$x_2$};

\filldraw [red, fill opacity=0.5] plot [smooth cycle] coordinates { (1,1) (1.7,2.5) (1.3,3) (2,3.5) (3,3) (2.1,2) (2,0)};

\draw[very thin,color=gray,dashed] (0,0) -- (3,3);

\draw (0,-0.4) node {$0$};
\draw (1,-0.4) node {$1$};
\draw (2,-0.4) node {$2$};
\draw (3,-0.4) node {$3$};
\draw (4,-0.4) node {$4$};
\draw (-0.4,1) node {$1$};
\draw (-0.4,2) node {$2$};
\draw (-0.4,3) node {$3$};

\end{tikzpicture}
\caption{Example of a diagonally-determined, non-convex set. Here, $\mbox{diag}(\Omega) = [1,3]$, and $\v = (1,0)$. \label{fig-cheby-1}}
\end{center}
\end{figure}

As a first observation, a dilation of a diagonally-determined set is again diagonally-determined.

\begin{lem}
\label{prop:dilation}
If $\Omega \subset \mathbb{R}^d$ is diagonally-determined with ${\rm  diag}(\Omega) = [a,b]$ and vector $\v$, then, for any $r >0$,
the dilation
\[
r \Omega \coloneqq \{r\x \; : \; \x\in \Omega\}
\]
is also diagonally-determined with ${\rm  diag}(r\Omega) = [ra,rb]$ and vector $\v$.
\end{lem}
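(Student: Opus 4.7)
The plan is to verify the two defining conditions of Definition~\ref{def:diag_concentration} for the set $r\Omega$, using the same vector $\v$ that works for $\Omega$. This is essentially a bookkeeping exercise, since multiplication by $r>0$ preserves both the linear relations and the interval structure involved.

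First I would compute $\mathrm{diag}(r\Omega)$ directly from the definition: $t \in \mathrm{diag}(r\Omega)$ if and only if $t\mathbf{1} \in r\Omega$, which (since $r>0$) is equivalent to $(t/r)\mathbf{1} \in \Omega$, i.e.\ $t/r \in [a,b]$, i.e.\ $t \in [ra,rb]$. So condition (1) holds with endpoints $ra$ and $rb$. Second, I would note that $\langle \v, \mathbf{1}\rangle = 1$ is a property of $\v$ alone and is unaffected by the dilation. For any $\y \in r\Omega$, writing $\y = r\x$ with $\x \in \Omega$, I would use linearity of the inner product to get $\langle \v, \y\rangle = r\langle \v, \x\rangle$; since $\langle \v, \x\rangle \in [a,b]$ by hypothesis and $r>0$, it follows that $\langle \v, \y\rangle \in [ra,rb]$, establishing condition (2).

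There is no real obstacle here; the only thing to watch is that $r>0$ is used at two distinct points (to pass from $r\Omega$-membership back to $\Omega$-membership in the diagonal calculation, and to preserve the orientation of the interval $[a,b]$ under multiplication by $r$). Consequently the proof is essentially a two-line verification.
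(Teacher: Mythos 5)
Your verification is correct and matches the paper's approach; the paper simply states that the lemma is an immediate consequence of Definition~\ref{def:diag_concentration} and omits the details you have spelled out. Both conditions are checked exactly as the definition requires, and your attention to where $r>0$ is used is appropriate.
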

\begin{proof}
The proof is an immediate consequence of Definition \ref{def:diag_concentration}.
\end{proof}

We will now show that  balls in $\RR^d$ (in any norm), \rev{centered at the origin}, are examples of diagonally-determined sets.
To this end, we first need to recall some definitions.

\begin{defn}
For a given norm $\|\cdot\|$ on $\mathbb{R}^d$,
we use $\|\cdot\|_*$ to denote its dual norm, namely
\[
\|\u\|_* = \sup \{ \langle \u,\x\rangle \; : \; \|\x\| = 1\}, \quad \u \in \RR^d.
\]
\end{defn}
 We  will also consider the special case of monotone norms.
\begin{defn}
We call a norm $\|\cdot\|$ on $\mathbb{R}^d$ {\em monotone}, if
\[
\| \x\| \le \| |\x| \| \quad \forall \x \in \mathbb{R}^d,
\]
where $|\x| = (|x_1|,\ldots,|x_d|)$.
\end{defn}
%If a given norm is monotone, then its dual norm satisfies a 'reversed monotonicity condition', as detailed in the following lemma from
% \cite[Proposition 3.3]{Barker1971}.
%
%\begin{lem}
%\label{lem:reverse monotonicity}
%If the norm $\|\cdot\|$ on $\mathbb{R}^d$ is monotone,
%then its dual norm $\|\cdot\|_*$ has the `reversed' monotonicity property:
%\begin{equation}\label{eq:reverse monotonocity}
%  \| \u\|_* \ge \| |\u| \|_* \quad \forall \u \in \mathbb{R}^d.
%\end{equation}
%\end{lem}
%\begin{proof}
%For any $\u \in \RR^d$, there is an $\x \in \RR^d$ with $\|\x\| = 1$ so that
%\[
%\| \u\|_* = |\langle \u,\x\rangle | \le \langle |\u | , |\x| \rangle \le \| |\u| \|_*,
%\]
%since $\| |\x| \| \le 1$.
%\end{proof}

We may now show that any centered ball is diagonally-determined.
\begin{prop}
\label{prop:ball}
Consider a  ball of radius $r$ in $\RR^d$ centered at the origin:
\[
 \Omega \coloneqq \{ \x \in \RR^d \; : \; \|\x\| \le r\},
\]
where $\|\cdot\|$ denotes any norm on $\RR^d$.
Then $\Omega$ is a diagonally-determined set. If, in addition, the norm $\|\cdot\|$ is monotone,then the vector $\v$ may be assumed to be entrywise-nonnegative, denoted by \ $\v \ge \mathbf{0}$.
\end{prop}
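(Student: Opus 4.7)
The diagonal is immediate to compute: by positive homogeneity, $\|t\mathbf{1}\|=|t|\,\|\mathbf{1}\|$, so
\[
\mathrm{diag}(\Omega) \;=\; \bigl\{t\in\RR: |t|\,\|\mathbf{1}\|\le r\bigr\} \;=\; \bigl[-r/\|\mathbf{1}\|,\; r/\|\mathbf{1}\|\bigr],
\]
verifying condition~(1) of Definition~\ref{def:diag_concentration} with $a=-b=-r/\|\mathbf{1}\|$. For condition~(2), the first step is to rewrite it as a dual-norm estimate: since $\Omega=-\Omega$ and $\sup_{\x\in\Omega}|\langle\v,\x\rangle|=r\,\|\v\|_*$, the requirement $\langle\v,\x\rangle\in[a,b]$ for all $\x\in\Omega$ is equivalent to $\|\v\|_*\le 1/\|\mathbf{1}\|$. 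The task therefore reduces to producing $\v\in\RR^d$ with
\[
\langle\v,\mathbf{1}\rangle=1 \qquad \text{and} \qquad \|\v\|_*\le 1/\|\mathbf{1}\|.
\]

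The plan for existence is standard. I would apply the finite-dimensional reflexivity identity
\[
\|\mathbf{1}\|\;=\;\sup\bigl\{\langle\v,\mathbf{1}\rangle\,:\,\|\v\|_*\le 1\bigr\}
\]
(equivalently, invoke Hahn--Banach combined with compactness of the dual unit ball) to obtain a $\v_0$ with $\|\v_0\|_*=1$ and $\langle\v_0,\mathbf{1}\rangle=\|\mathbf{1}\|$; then $\v\coloneqq\v_0/\|\mathbf{1}\|$ satisfies both required conditions.

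For the monotone case, the additional goal is to show that $\v_0$ above may be chosen entrywise nonnegative. The plan is to argue that the monotone property of the primal norm propagates to the dual norm in the form $\||\u|\|_*\le\|\u\|_*$; once this is established, for any optimizer $\v_0$ the vector $|\v_0|$ is again admissible (i.e.\ $\||\v_0|\|_*\le 1$) and satisfies $\langle|\v_0|,\mathbf{1}\rangle\ge\langle\v_0,\mathbf{1}\rangle=\|\mathbf{1}\|$ by nonnegativity of $\mathbf{1}$, so it is itself an optimizer, and $\v\coloneqq|\v_0|/\|\mathbf{1}\|$ is the desired nonnegative vector. I anticipate the main obstacle to be precisely this transfer of monotonicity to the dual norm: given a primal unit vector $\x^*$ attaining $\||\u|\|_*=\langle|\u|,\x^*\rangle$, the natural strategy is to pass to $\y=\sgn(\u)\circ\x^*$, which satisfies $\langle\u,\y\rangle=\langle|\u|,\x^*\rangle$ and $|\y|=|\x^*|$, and then to use the hypothesis $\|\x\|\le\||\x|\|$ together with a careful sign-pattern argument to verify $\|\y\|\le 1$, so as to conclude $\|\u\|_*\ge\||\u|\|_*$.
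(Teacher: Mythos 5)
Your treatment of the first assertion (computing $\mathrm{diag}(\Omega)=[-r/\|\mathbf{1}\|,r/\|\mathbf{1}\|]$ and producing $\v=\u/\|\mathbf{1}\|$ from a norming functional $\u$ of $\mathbf{1}$ via finite-dimensional duality) is exactly the paper's argument and is fine. The problem is the monotone case. Your plan hinges on the lemma that the paper's monotonicity condition $\|\x\|\le\||\x|\|$ forces the dual norm to satisfy $\||\u|\|_*\le\|\u\|_*$, and you correctly identify the verification of $\|\y\|\le 1$ for $\y=\sgn(\u)\circ\x^*$ as the crux. That verification cannot be completed: monotonicity only gives $\|\y\|\le\||\y|\|=\||\x^*|\|$, and the paper's \emph{one-sided} definition provides no upper bound on $\||\x^*|\|$ in terms of $\|\x^*\|$. (The transfer you want is a theorem for \emph{absolute} norms, where $\|\x\|=\||\x|\|$; the paper's hypothesis is strictly weaker.) In fact the lemma is false for $d\ge 3$: take
\[
B=\bigl\{\x\in\RR^3:\ |x_1|,|x_2|,|x_3|\le 2,\ |3x_1+x_2+x_3|\le\varepsilon,\ |x_2-x_3|\le 1\bigr\},\qquad 0<\varepsilon\le\tfrac12 .
\]
Any $\mathbf{z}\in B$ with $\mathbf{z}\ge\mathbf{0}$ has $3z_1+z_2+z_3\le\varepsilon$, hence all $z_i\le\varepsilon$, and every sign flip of $\mathbf{z}$ satisfies all three constraints; so the associated norm is monotone in the paper's sense. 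Yet for $\u=(1,1,-1)$ one checks $\|(1,1,1)\|_*=h_B(1,1,1)\ge\langle(1,1,1),(\tfrac{\varepsilon-4}{3},2,2)\rangle=\tfrac{8+\varepsilon}{3}$, while $x_1+x_2-x_3\le\tfrac{\varepsilon-x_2-x_3}{3}+x_2-x_3=\tfrac{\varepsilon}{3}+\tfrac{2(x_2-x_3)-2x_3}{3}\le\tfrac{\varepsilon}{3}+2$ on $B$, so $\||\u|\|_*>\|\u\|_*$.

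So this step is a genuine gap, even though the conclusion you are after is true. The paper reaches it by a different and much shorter route that you should adopt: it shows that \emph{any} norming functional $\u$ of $\mathbf{1}$ (i.e.\ $\|\u\|_*=1$, $\langle\u,\mathbf{1}\rangle=\|\mathbf{1}\|$) is automatically entrywise nonnegative, via the chain
\[
\sum_{i=1}^d u_i=\langle\u,\mathbf{1}\rangle=\|\mathbf{1}\|=\||\sgn(\u)|\|\ \ge\ \|\sgn(\u)\|\ \ge\ \langle\u,\sgn(\u)\rangle=\sum_{i=1}^d|u_i|,
\]
where the first inequality is exactly the one-sided monotonicity applied to $\sgn(\u)$ and the second is $\langle\u,\y\rangle\le\|\u\|_*\|\y\|$. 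This forces $\u\ge\mathbf{0}$ directly, with no need for any statement about the dual norm of $|\u|$, and it uses only the hypothesis as the paper states it. I recommend replacing your symmetrization-of-the-optimizer argument by this one.
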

\begin{proof}
We prove the statement for unit balls; the required result then follows from Lemma \ref{prop:dilation}.
For a centered unit ball, one has ${\rm diag}(\Omega) = [a,b]$ with \rev{$a = -1/\|\mathbf{1}\|$ and} $b = 1/\|\mathbf{1}\|$.

Now let $\v = \u / \|\mathbf{1}\|$, where $\u \in \RR^d$ is `dual' to $\mathbf{1}$, in the sense that $\|\u\|_* = 1$
 and $\langle \u, \mathbf{1} \rangle = \|\mathbf{1}\|$.
We then have $\langle \v , \mathbf{1} \rangle = \langle \u / \|\mathbf{1}\|, \mathbf{1} \rangle = 1$, as required.
Moreover we have, for any $\x \in \Omega$,
\[
|\langle \v, \x \rangle| = |\langle \u / \|\mathbf{1}\|, \x \rangle|  \le  (1/\|\mathbf{1}\|) \|\u\|_* \|\x\| \le 1/\|\mathbf{1}\|.
\]
Thus we see that $\langle \v, \x \rangle \in  [a,b]$ for all $\x \in \Omega$, proving that $\Omega$ is diagonally-determined.

Assume now that the norm $\|\cdot\|$ is monotone.
We have
\[
\langle \u  , \mathbf{1} \rangle = \|\mathbf{1}\| \ge \|\mbox{sgn}(\u)\| \ge \langle \u  , \mbox{sgn}(\u) \rangle,
\]
where, for $i \in \{1,\ldots,d\}$,
\[
\mbox{sgn}(\u)_i = \left\{
\begin{array}{rl}
1 & \mbox{if $u_i \ge 0$} \\
-1 & \mbox{else.}
\end{array}
\right.
\]
This means that $\sum_{i=1}^d u_i \ge \sum_{i=1}^d |u_i|$, which holds if and only of $\u \ge \mathbf{0}$.
Subsequently, since $\v = \u / \|\mathbf{1}\|$, we have $\v \ge \mathbf{0}$.

\end{proof}

By Definition \ref{def:diag_concentration}, we have the following immediate corollary.
\begin{cor}
Assume $\Omega \subset \RR^d$ is a subset of a  ball in $\RR^d$ in any norm, centered at the origin,  and that the diagonal of $\Omega$ coincides with the diagonal of the ball.
Then $\Omega$ is  a diagonally-determined set.
\end{cor}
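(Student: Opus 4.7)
The plan is to simply inherit the diagonally-determined structure of the ambient ball. By Proposition~\ref{prop:ball} applied to the ball $B \subset \RR^d$ (in the given norm, centered at the origin), we obtain an interval $[a,b] = \mathrm{diag}(B)$ and a vector $\v \in \RR^d$ satisfying $\langle \v, \mathbf{1}\rangle = 1$ and $\langle \v, \x\rangle \in [a,b]$ for every $\x \in B$.

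Next, I would verify the two requirements of Definition~\ref{def:diag_concentration} directly for $\Omega$. Condition~(1) is given by hypothesis: $\mathrm{diag}(\Omega) = \mathrm{diag}(B) = [a,b]$, which is an interval. For condition~(2), I would reuse the same $\v$: since $\Omega \subseteq B$, every $\x \in \Omega$ also lies in $B$, hence $\langle \v, \x\rangle \in [a,b]$, while $\langle \v, \mathbf{1}\rangle = 1$ is unchanged.

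There is no real obstacle here — the argument is essentially a monotonicity observation, namely that the property ``$\langle \v, \x \rangle$ stays in $[a,b]$ on the set'' is preserved under passing to a subset, provided the diagonal has not shrunk so that the interval $[a,b]$ itself must be re-identified. The assumption $\mathrm{diag}(\Omega) = \mathrm{diag}(B)$ is included precisely to rule this out and to guarantee that $[a,b]$ remains the correct interval for $\Omega$. Thus the corollary follows immediately from Definition~\ref{def:diag_concentration} together with Proposition~\ref{prop:ball}.
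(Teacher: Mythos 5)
Your argument is correct and is exactly the one the paper intends: the corollary is stated there as an immediate consequence of Definition~\ref{def:diag_concentration} and Proposition~\ref{prop:ball}, with the proof omitted, and your write-up simply fills in the obvious details (inherit $\v$ and $[a,b]$ from the ball, note that condition (2) passes to subsets and condition (1) is the hypothesis). No issues.
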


To illustrate this corollary, the  example in Figure \ref{fig-cheby-3} shows a non-convex subset of a unit (Euclidean) ball, that has the same diagonal as the ball.
%tikz picture by Etienne
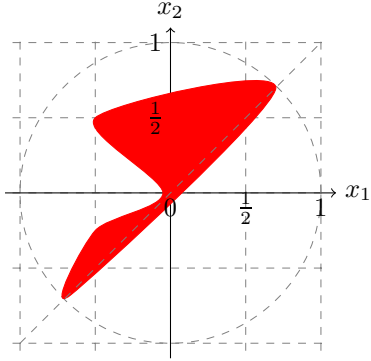
\begin{figure}[ht]
\begin{center}
\begin{tikzpicture}[domain=-2:2]
\draw[very thin,color=gray,dashed] (-2.1,-2.1) grid (2.1,2.1);
\draw[->] (-2.2,0) -- (2.2,0) node[right] {$x_1$};
\draw[->] (0,-2.2) -- (0,2.2) node[above] {$x_2$};

\draw[very thin,color=gray,dashed] (0,0) circle [radius=2];

\filldraw [red, fill opacity=0.5] plot [smooth cycle] coordinates { (-1.33,-1.36) (-1,-0.5) (-0.1,0) (-1,1)  (1.4,1.39) };

\draw[very thin,color=gray,dashed] (-2,-2) -- (2,2);

\draw (0,-0.2) node {$0$};
\draw (1,-0.2) node {$\frac{1}{2}$};
\draw (2,-0.2) node {$1$};
\draw (-0.2,1) node {$\frac{1}{2}$};
\draw (-0.2,2) node {$1$};

\end{tikzpicture}
\caption{Example of a diagonally-determined, non-convex set. Here,  $\mbox{diag}(\Omega) = \left[-\frac{1}{\sqrt{2}},\frac{1}{\sqrt{2}}\right]$,
 and $\v = \left(\frac{1}{2},\frac{1}{2}\right)$. \label{fig-cheby-3}}
\end{center}
\end{figure}
%Consider now the special case of {\em monotone norms}, i.e.\ norms that satisfy $\| \x\| \le \| |\x| \|$ for all $\x \in \mathbb{R}^d$, where $|\x| = (|x_1|,\ldots,|x_d|)$.
%In this case the unit ball, centered at the origin, is diagonally-determined with a entrywise-nonnegative vector $\v$, as follows readily from the proof of Proposition \ref{prop:ball}.
%Monotone norms include all
%$\ell^p$-norms, and the {\sf owl} norm in Example \ref{ex:wol} below.

\begin{exam}
\label{ex:wol}
It is insightful to consider the specific example of the unit ball of the {\sf owl} norm (ordered weighted $\ell_1$-norm) defined relative to weights $w_1 \ge w_2 \ge \cdots \ge w_d \ge 0$ by
$$
\|\x\|_{\sf owl} = \sum_{i=1}^d w_i x^*_i,
$$
with
$(x_1^*,\ldots,x_d^*)$ being the nonincreasing rearrangement of $(|x_1|,\ldots,|x_d|)$.
Note that $\|\mathbf{1}\|_{\sf owl} = \sum_{i=1}^d w_i \eqqcolon W$, so that $b = 1/W$.
Letting $\w = (w_1,\ldots,w_d)$, we set $\v = \w/W$, since
$\langle \w/W, \mathbf{1} \rangle = (\sum_{i=1}^d w_i)/W = 1$
and, for $\|\x\|_{\rm owl} \le 1$,
\begin{eqnarray*}
% \nonumber % Remove numbering (before each equation)
  |\langle \w/W, \x \rangle | &=& \left|\sum_{i=1}^n w_i x_i \right| /W \\
   &\le & \left(\sum_{i=1}^n w_i | x_i |\right) /W \\
   &\le & \left(\sum_{i=1}^n w_i x^*_i \right) /W \\
   &\le & 1/W = b.
\end{eqnarray*}
\end{exam}

The intersection of certain balls in $\RR^d$ with the nonnegative orthant in $\RR^d$ is also diagonally-determined, as the next result shows.
\begin{prop}
\label{prop:ell_p_ex}
Assume $\|\cdot\|$ is a norm on $\RR^d$ with dual norm $\|\cdot \|_*$, and that the following holds:
there is an entrywise-nonnegative vector  $\u \ge \mathbf{0}$ such that $\|\u \|_*= 1$ and $\langle \u, \mathbf{1}\rangle = \|\mathbf{1}\|$.

Further assume  $\Omega \subset \RR^d$ has diagonal $\mathrm{diag}(\Omega) = [0,r/\|\mathbf{1}\|]$ for some $r>0$, and,  for all $\x \in \Omega$,
 $\x \ge \mathbf{0}$  and $\|\x\| \le r$.
Then $\Omega$ is a diagonally-determined set with vector $\v = \mathbf{u}/\|\mathbf{1}\|$.
\end{prop}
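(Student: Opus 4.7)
The plan is to verify directly the two conditions of Definition~\ref{def:diag_concentration} with $a = 0$, $b = r/\|\mathbf{1}\|$, and $\v = \u/\|\mathbf{1}\|$. The first condition is built into the hypothesis, since $\mathrm{diag}(\Omega) = [0, r/\|\mathbf{1}\|]$ by assumption. For the second condition, I would first check the normalization $\langle \v, \mathbf{1}\rangle = 1$; this follows immediately from $\langle \u, \mathbf{1}\rangle = \|\mathbf{1}\|$.

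The core of the argument is then showing that $\langle \v, \x\rangle \in [0, r/\|\mathbf{1}\|]$ for every $\x \in \Omega$. For the upper bound, I would apply the duality inequality $\langle \u, \x\rangle \le \|\u\|_* \|\x\|$, together with $\|\u\|_* = 1$ and $\|\x\| \le r$, to conclude $\langle \v, \x\rangle = \langle \u, \x\rangle / \|\mathbf{1}\| \le r / \|\mathbf{1}\|$. For the lower bound, I would use the componentwise nonnegativity of both $\u$ and $\x$: since $\u \ge \mathbf{0}$ and $\x \ge \mathbf{0}$, the inner product $\langle \u, \x\rangle = \sum_{i=1}^d u_i x_i$ is a sum of nonnegative terms, hence $\langle \v, \x\rangle \ge 0$.

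There is no genuine obstacle here; the result follows by bookkeeping once the right splitting between duality (for the upper bound) and positivity (for the lower bound) is identified. The proposition is in fact a mild generalization of the reasoning used in Proposition~\ref{prop:ball}, tailored to the situation where $\Omega$ lies in the nonnegative orthant, which lets us replace a two-sided duality bound $|\langle \v, \x\rangle| \le 1/\|\mathbf{1}\|$ by the one-sided bound $\langle \v, \x\rangle \in [0, r/\|\mathbf{1}\|]$ consistent with the shifted diagonal.
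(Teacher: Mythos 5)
Your proposal is correct and follows essentially the same route as the paper's proof: duality ($\langle \u,\x\rangle \le \|\u\|_*\|\x\|$) for the upper bound and entrywise nonnegativity of $\u$ and $\x$ for the lower bound. The only cosmetic difference is that the paper first reduces to $r=1$ via the dilation lemma, whereas you carry the general $r$ through directly; both are fine.
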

\begin{proof}
We again prove the statement for the case $r=1$; the result for general $r>0$ then follows from Lemma \ref{prop:dilation}.
Setting $\v = \mathbf{u}/\|\mathbf{1}\|$, one has $\langle \v, \mathbf{1} \rangle =1 $.
Moreover, for $\x \in \Omega$,
 $\langle \v, \x \rangle \ge 0$,
since $\x,\v \ge 0$, and
\[
\langle \v, \x \rangle = \frac{1}{\|\mathbf{1}\|}\langle \mathbf{u}, \x \rangle \le \frac{1}{\|\mathbf{1}\|}\|\u\|_*\|\x\| \le \frac{1}{\|\mathbf{1}\|},
\]
so that $\langle \v, \x \rangle \in \mathrm{diag}(\Omega)$ for all $\x \in \Omega$.
\end{proof}
We note again that the assumption on the  norm in Proposition \ref{prop:ell_p_ex} is met by, for example, all
{\em monotone norms}, as shown in the proof of Proposition \ref{prop:ball}.

As a simple corollary of the proposition, the simplex
\[
\Omega = \left\{ \x \in \RR^d \; : \; \x \ge \mathbf{0}, \;  \sum_{i=1}^d x_i\le 1\right\},
\]
is diagonally-determined, since  it is the intersection of the unit $\ell^1$-ball with the nonnegative orthant.
See Figure~\ref{fig-cheby-2} for an illustrative example of Proposition \ref{prop:ell_p_ex}.

%tikz picture by Etienne
\begin{figure}[ht]
\begin{center}
\begin{tikzpicture}[domain=0:4]
\draw[very thin,color=gray,dashed] (-0.1,-0.1) grid (3.1,3.1);
\draw[->] (-0.2,0) -- (3.6,0) node[right] {$x_1$};
\draw[->] (0,-0.2) -- (0,3.5) node[above] {$x_2$};

\filldraw [red, fill opacity=0.5] plot [smooth cycle] coordinates { (0,0.015) (1,2) (2,2.25) (2.7,2.665) (2,1) (1.1,0.95) };

\draw[very thin,color=gray,dashed] (0,0) -- (3,3);

\draw (0,-0.4) node {$0$};
\draw (1,-0.4) node {$\frac{1}{2}$};
\draw (2,-0.4) node {$1$};
\draw (3,-0.4) node {$\frac{3}{2}$};
\draw (-0.4,1) node {$\frac{1}{2}$};
\draw (-0.4,2) node {$1$};
\draw (-0.4,3) node {$\frac{3}{2}$};

\end{tikzpicture}
\caption{Example to illustrate Proposition \ref{prop:ell_p_ex}. Here, $\mbox{diag}(\Omega) = [0,b]$, with $b \approx 1.35$. Moreover, $\Omega$ is
contained in the $\ell^1$-ball of radius $r = b{d} \approx 2.7$, intersected with the nonnegative quadrant. \label{fig-cheby-2}}
\end{center}
\end{figure}
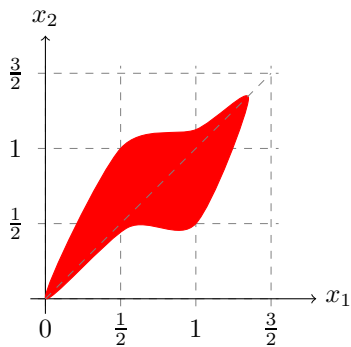

\rev{
Finally, the property of being diagonally-determined is not invariant under translation in general. The exception is a translation by a constant multiple of the all-ones vector.
We state this observation as a lemma for later use. The proof again follows immediately from Definition \ref{def:diag_concentration}
and is omitted.
\begin{lem}
\label{lemma:translation}
Fix $\alpha \in \mathbb{R}$. Then a given $\Omega \subset \mathbb{R}^d$  is diagonally determined with $\mathrm{diag}(\Omega) = [a,b]$ and vector $\v$ if and only if its translation $\Omega + \alpha \mathbf{1}$ is diagonally determined
with diagonal $[a + \alpha, b + \alpha]$ and vector $\v$.
\end{lem}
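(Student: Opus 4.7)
The plan is to unpack both conditions of Definition~\ref{def:diag_concentration} for the translated set $\Omega + \alpha \mathbf{1}$ and show that they correspond exactly, with the shifted diagonal $[a+\alpha, b+\alpha]$ and the same vector $\v$, to the two conditions for $\Omega$. Since the statement is an ``if and only if'' but the roles of $\Omega$ and $\Omega + \alpha\mathbf{1}$ are symmetric (replace $\alpha$ by $-\alpha$), I would only prove the forward direction.

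First I would verify the diagonal shifts correctly. By definition, $t \in \mathrm{diag}(\Omega + \alpha\mathbf{1})$ iff $t\mathbf{1} \in \Omega + \alpha\mathbf{1}$ iff $(t-\alpha)\mathbf{1} \in \Omega$ iff $t - \alpha \in \mathrm{diag}(\Omega) = [a,b]$, so that $\mathrm{diag}(\Omega + \alpha\mathbf{1}) = [a+\alpha, b+\alpha]$, which is again an interval. Next I would check condition~(2) with the same $\v$: one already has $\langle \v, \mathbf{1}\rangle = 1$, and for any point $\y \in \Omega + \alpha\mathbf{1}$, writing $\y = \x + \alpha\mathbf{1}$ with $\x \in \Omega$, the crucial computation is
\[
\langle \v, \y \rangle \,=\, \langle \v, \x\rangle + \alpha \langle \v, \mathbf{1}\rangle \,=\, \langle \v, \x\rangle + \alpha \,\in\, [a+\alpha,\, b+\alpha],
\]
using the hypothesis $\langle \v,\x\rangle \in [a,b]$. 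Thus $\Omega + \alpha \mathbf{1}$ satisfies both conditions of Definition~\ref{def:diag_concentration} with the claimed diagonal and vector.

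There is no real obstacle here: the argument hinges on the single observation that the normalization $\langle \v, \mathbf{1}\rangle = 1$ turns translation by $\alpha \mathbf{1}$ into the scalar shift $\langle \v,\cdot\rangle \mapsto \langle \v,\cdot\rangle + \alpha$, which exactly matches the shift in the diagonal. The reverse implication follows by applying the forward direction to $\Omega + \alpha\mathbf{1}$ with the shift $-\alpha$.
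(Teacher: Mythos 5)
Your proof is correct and is precisely the routine verification that the paper itself declares ``follows immediately from Definition~\ref{def:diag_concentration}'' and omits; the key point --- that $\langle \v,\mathbf{1}\rangle=1$ converts the translation by $\alpha\mathbf{1}$ into the scalar shift by $\alpha$ --- is exactly right, as is the symmetry argument for the reverse direction.
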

}

\section{Least Chebyshev polynomial for a diagonally-determined set}
\label{subsec:main-diagonal-supported}

In this section, and with $\Omega$ being a set in $\RR^d$, we consider the problem of finding a least polynomial $P^* \in \Pi_n^*$ such that
$$
     \|P^*\|_\Omega =  \inf \left  \{ \|P\|_{\Omega}: P \in \Pi_n^* \right \},
$$
for the special case where $\Omega$ is diagonally-determined.

\begin{thm} \label{thm:supported}
Let $\Omega$ be diagonally-determined with vector $\v$ and $\mathrm{diag}(\Omega) = [a,b]$. Then
\begin{equation}\label{eq:inf}
  \inf \left  \{ \|P\|_{\Omega}: P \in \Pi_n^* \right \} = \left(\frac{b-a}{2}\right)^n \frac{1}{2^{n-1}}\,,
\end{equation}
and  the infimum is attained by the polynomial
\begin{equation}
    \label{eq:minimizer}
   \x\mapsto P^*(\x) \,\coloneqq\, \left(\frac{b-a}{2}\right)^n \frac{1}{2^{n-1}}   T_n\left(- 1+ 2 \,\frac{\langle \v,\x\rangle -a}{b-a}\right).
\end{equation}
\end{thm}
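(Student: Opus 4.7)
The plan is to prove the equality in \eqref{eq:inf} by establishing the upper bound via the explicit polynomial \eqref{eq:minimizer} and the lower bound by restricting to the diagonal, reducing to the classical univariate Chebyshev extremality.

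First, I would verify that $P^* \in \Pi_n^*$. The leading term of $T_n(y)$ is $2^{n-1}y^n$, and the argument $y = -1 + 2(\langle \v,\x\rangle - a)/(b-a)$ contributes a factor $(2/(b-a))^n$ in front of $\langle \v,\x\rangle^n$ at highest degree. The prefactor $((b-a)/2)^n \cdot 2^{1-n}$ cancels everything, so the degree-$n$ part of $P^*$ is exactly $\langle \v,\x\rangle^n = \sum_{\a \in \NN^d_n} \binom{n}{\a} \v^\a \x^\a$. The sum of the coefficients is $(\sum_{i=1}^d v_i)^n = \langle \v,\mathbf{1}\rangle^n = 1$, using condition (2) of Definition~\ref{def:diag_concentration}. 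Hence $P^* \in \Pi_n^*$.

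Next, for the upper bound: for any $\x \in \Omega$, condition (2) gives $\langle \v,\x\rangle \in [a,b]$, so the affine argument of $T_n$ in \eqref{eq:minimizer} lies in $[-1,1]$. Since $|T_n| \le 1$ on $[-1,1]$, it follows immediately that $\|P^*\|_\Omega \le ((b-a)/2)^n \cdot 2^{1-n}$.

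For the lower bound, the key observation is that condition (1) guarantees $t\mathbf{1} \in \Omega$ for every $t \in [a,b]$, so for any $P \in \Pi_n^*$,
\[
\|P\|_\Omega \ge \sup_{t \in [a,b]} |P(t\mathbf{1})|.
\]
Now $P(t\mathbf{1}) = \bigl(\sum_{\a \in \NN^d_n} a_\a\bigr) t^n + Q(t\mathbf{1}) = t^n + q(t)$, where $q(t) \coloneqq Q(t\mathbf{1})$ has degree at most $n-1$; thus $t \mapsto P(t\mathbf{1})$ is a \emph{monic univariate polynomial} of degree $n$. By the classical Chebyshev extremal property on $[a,b]$ (a change of variable $t = (b-a)y/2 + (a+b)/2$ reduces to the standard $[-1,1]$ bound $2^{1-n}$ on monic polynomials), one has
\[
\sup_{t \in [a,b]} |P(t\mathbf{1})| \ge \left(\frac{b-a}{2}\right)^n \frac{1}{2^{n-1}}.
\]
This gives the desired lower bound, matching the upper bound, and finishes the proof.

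The argument has no serious obstacle: the only subtlety is spotting that the restriction $t \mapsto P(t\mathbf{1})$ is automatically monic as a univariate polynomial in $t$ because of the constraint $\sum_\a a_\a = 1$, which is precisely what makes the one-dimensional Chebyshev theorem applicable. Everything else is routine verification of the formula \eqref{eq:minimizer}.
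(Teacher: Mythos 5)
Your proof is correct and follows essentially the same route as the paper: the lower bound by restricting to the diagonal and invoking the classical univariate Chebyshev extremality on $[a,b]$, the upper bound from $\langle \v,\x\rangle \in [a,b]$, and the membership $P^*\in\Pi_n^*$ (which you verify by multinomial expansion of $\langle\v,\x\rangle^n$, whereas the paper deduces $\sum_\a a^*_\a=1$ by restricting $P^*$ to the diagonal --- both are routine and equivalent). No gaps.
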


\begin{proof}
Let $P \in \Pi_n^*$, say
\[
P(\x) = \sum_{\a \in \NN^d_n}a_\a\,\x^\a+Q(\x) \, \quad \text{with }\quad \sum_{\a \in \NN^d_n}a_\a=1\,\text{and }\:Q\in \Pi^d_{n-1}.
\]
 Then $P(t,\ldots, t) = t^n + q_{n-1}(t)$, where $q_{n-1}(t) = Q(t,\ldots,t)$ is a univariate polynomial of degree at most $n-1$
in the single variable $t$. Hence,
\begin{equation}\label{eq:lwbd}
   \|P\|_\Omega  \ge \max_{a \le t \le b} \left | t^n + q_{n-1}(t) \right| \ge  \left(\frac{b-a}{2}\right)^n \frac{1}{2^{n-1}}\,,
\end{equation}
where we have used a classical result in one variable (e.g.\ \cite[Theorem 2.1]{Riv}), and equality in the last inequality is
 attained by choosing $q_{n-1}$
such that $t^n + q_{n-1}(t)$ is the rescaled Chebyshev polynomial in the right-hand side of \eqref{eq:minimizer}, when restricted to $\x = (t,...,t)$ ($t \in \RR$).

Moreover, as $\Omega$ is diagonally-determined, then for all $\x \in \Omega$, one has $\langle \v,\x \rangle \in [a,b]$, and therefore
$$
  \|P^*\|_\Omega = \left(\frac{b-a}{2}\right)^n \frac{1}{2^{n-1}}  \max_{\x \in \Omega} \left| T_n\left(- 1+ 2 \frac{\langle \v,\x \rangle  -a}{b-a}\right) \right|
   \le \left(\frac{b-a}{2}\right)^n \frac{1}{2^{n-1}}\,.
$$
Finally, we need to show that $P^* \in \Pi_n^*$. First note that $t \mapsto P^*(t \mathbf{1})$ is a
 monic univariate polynomial in $t$, since $\langle \v,\mathbf{1}\rangle = 1$.
On the other hand, we may write $P^*$ in the form
\[
P^*(\x) = \sum_{\alpha \in \NN^d_n}a^*_\a\,\x^\a+Q^*(\x) \,  \text{ with } Q^*\in \Pi^d_{n-1},
\]
so that
\[
P^*(t \mathbf{1}) = \sum_{\a \in \NN^d_n}a^*_\a t^n +
Q^*(t \mathbf{1}).
\]
Thus $\sum_{\a\in \NN^d_n}a^*_\a=1$, and therefore $P^* \in \Pi_n^*$, as required.
\end{proof}

The minimal value in \eqref{eq:inf} does not \rev{depend} explicitly on the dimension $d$, but the diagonal $[a,b]$ may. For example, if $\Omega$ is the unit Euclidean ball in $\RR^d$, then \rev{ $[a,b] = [-1/\sqrt{d},1/\sqrt{d}]$}.

\begin{exam}
For the cube $\Omega=[-1,1]^d$, Theorem \ref{thm:supported} shows that $\|P^*\|_\Omega = 2^{-n+1}$. In contrast, for every $\a\in\NN^d$ with $|\alpha| = n$,
 we have
$$
    \inf_{P \in \Pi_{n-1}^d}\| \x^\a - P\|_{[-1,1]^d} = 2^{-n+d},
$$
as shown in \rev{\cite{Yudin} (see also \cite{Sloss})}, which depends on the dimension $d$.
\end{exam}

  Since an optimal solution of problem \eqref{def-intro-multi-2} is given in terms of the univariate Chebyshev
    polynomial $T_n$ in Theorem  \ref{thm:supported}, the reader may wonder if this optimal solution is in fact unique. This turns out to be not the case, as the next example shows.
\begin{exam}\label{ex:simplex1}
The simplex
$\triangle^d =\{\,\x \in \RR^d \; : \; 0 \le x_1 \le x_2 \le \ldots \le x_d \le 1\,\}$ is diagonally-determined with $\v = (1,0,\ldots,0)$, and
$\mathrm{diag}(\triangle^d)  = [0,1]$. Hence, by Theorem \ref{thm:supported},
\begin{equation*} %\label{eq:inf}
 \|P^*\|_{\triangle^d} = \inf \left  \{ \|P\|_{\triangle^d}: P \in \Pi_n^* \right \} = \left(\frac{1}{2}\right)^n \frac{1}{2^{n-1}} = 2^{-2n+1}
\end{equation*}
is independent of the dimension $d$. Moreover, one has $P^*(\x) = 2^{-2n+1}T_n(2x_1-1)$, which is univariate.
Another valid choice for $\v$ is $\v = (0,1,0,\ldots,0)$ that leads to a different least Chebyshev polynomial,
namely $\x \mapsto 2^{-2n+1}T_n(2x_2-1)$.
\end{exam}

\rev{
\begin{exam}\label{ex:simplex1}
Returning to Example \ref{ex:wol}, where $\Omega$ is the unit ball for the {\sf owl} norm,
we have $[a,b] = [-1/W,1/W]$ and $\v = \w/W$, so that, by Theorem \ref{thm:supported}, the least Chebyshev  polynomial of degree $n$ becomes
\[
  \x\mapsto P^*(\x) = \left(\frac{1}{W}\right)^n \frac{1}{2^{n-1}}   T_n\left(\langle \w,\x\rangle\right).
\]
\end{exam}
}
%\begin{rem}

%\end{rem}

%\begin{exam}
%Consider the diagonally-determined domain with %$\v = (1,0)$,
%\[
%\Omega = \{(x_1,x_2) \; : \; -1 \le x_1 \le 1, \; %\max\{-1,-2x_1^2+x_1\}\le x_2 \le 2(1-x_1^2) + %x_1\},
%\]
%where $\mathrm{diag}(\Omega) = [-1,1]$.
%By Theorem \ref{thm:supported}, the optimal value %of problem \eqref{def-intro-multi-2}  is $2^{-%n+1}$ in this case. For $n=2$, we provide an %optimal solution that is different to the %solution $\mathbf{x} \mapsto \frac{1}{2}T_2(x_1)$ %given in Theorem~\ref{thm:supported}. To this %end, define
%\[
%Q(x_1,x_2) = x_1^2 - \frac{1}{2} + \frac{1}{2}%(x_2-x_1).
%\]
%It is straightforward to verify that
%\[
%-\frac{1}{2} \le Q(x_1,x_2) \le \frac{1}{2} \quad %\text{for all } (x_1,x_2) \in \Omega.
%\]
%Thus, for this choice of $\Omega$,  $Q$ is %another solution of problem~\eqref{def-intro-%multi-2}.
%\end{exam}

%\subsection{Invariance properties}
We point out that the requirement $\sum_{\a \in \NN^d_n} a_\a =1$, is susceptible to an affine change of variables other than a translation.
For instance, the triangle $\hat \triangle^2 = \{\,\x: x_1\ge 0, x_2 \ge 0, x_1 + x_2 \le 1\,\}$, is a mirror image of $\triangle^2$ under $x_2 \mapsto 1-x_2$.
However, under this affine change of variables, the leading monomial $a_0 x_1^2 + a_1 x_1 x_2 + a_2 x_2^2$ of a polynomial in $\Pi_2^*$ becomes,
$a_0 x_1^2 - a_1 x_1 x_2 + a_2 x_2^2$, which is, in general, no longer an element of $\Pi_2^*$.

\rev{
Moreover, the extremal problem \eqref{def-intro-multi-2} in Definition \ref{def:extremal_problem} is not rotationally invariant.
To see this consider the following three sets in $\mathbb{R}^2$, that are rotations of the same line segment:
\begin{eqnarray*}
\Omega_1 &:=& \{(x,x)  \; | \;  x \in [-1,1]\} \\
\Omega_2 &:=& \{(0,x)  \; | \;  x \in [-\sqrt{2},\sqrt{2}]\} \\
\Omega_3 &:=& \{(x,-x)  \; | \;  x \in [-1,1]\}. \\
\end{eqnarray*}
Of these three sets, only $\Omega_1$ is diagonally-determined.
Even though the three sets are rotations of each other, the solution to the extremal problem \eqref{def-intro-multi-2} is starkly different in each case:
\begin{itemize}
  \item For $\Omega_1$, Theorem \ref{thm:supported} provides the solution $(x_1,x_2) \mapsto \frac{1}{2^{n-1}}T_n(x_1)$ as a least Chebyshev polynomial, and thus $\inf_{P\in\Pi^*_n}\, \|P\|_{\Omega_1} = \frac{1}{2^{n-1}}$;
  \item For $\Omega_2$, a least Chebyshev polynomial is clearly given by $(x_1,x_2) \mapsto x_1^n$, so that $\inf_{P\in\Pi^*_n}\, \|P\|_{\Omega_2} = 0$;
  \item For $\Omega_3$, a least Chebyshev polynomial is
 $
  (x_1,x_2) \mapsto \frac{1}{2^n}(x_1+x_2)^n,
 $
   so that $\inf_{P\in\Pi^*_n}\, \|P\|_{\Omega_3} = 0$.
\end{itemize}
The example shows that the diagonal of $\Omega$ plays a key role in determining the least Chebyshev polynomial,
 and that it is not only an artefact of the analysis.}

\section{The dual framework and signatures}\label{sec:dual-framework-signatures}

In this section we consider the dual problem of~\eqref{def-intro-multi-2}.
Our goal is to show that, in the special case of diagonally-determined sets, the dual problem has a closed-form solution. Moreover, we will construct a dual solution that is atomic (discrete), and supported on $n+1$ points.

\subsection{The dual problem}
With $\Omega$ being a set in $\RR^d$, let $C(\Omega)$ be the space of continuous functions
on $\Omega$, and let $C(\Omega)^*$ be the dual space of $C(\Omega)$.
For compact $\Omega$, one has the following, strong duality result for problem \eqref{def-intro-multi-2}.
\begin{thm}
\label{thm:strong_duality_1_3}
Let $\Omega \subset \RR^d$ be compact, and consider the dual problem of \eqref{def-intro-multi-2}, namely
\begin{equation}
\label{eq:gamma*}
\gamma^* \coloneqq \sup_{\substack{\gamma \in \mathbb{R}\\ L \in C(\Omega)^*}}
\bigg\{ \gamma : \;  L_{| \Pi_{n-1}^d} = 0 , \, \|L\|_{C(\Omega)^*} = 1, \mbox{and }  L(\mathbf{x} \mapsto \mathbf{x}^\alpha) = \gamma \;\text{ for all } \alpha \in \NN^d_n \bigg\},
\end{equation}
where the norm of $L$ is defined as
\[
\|L\|_{C(\Omega)^*} = \sup_{h \in C(\Omega)} \{ |Lh| \; : \|h\|_{\Omega} \le 1\}.
\]
One has
$
\gamma^* \le \inf \left\{ \|P\|_\Omega \; : \; P\in\Pi^*_n\right\}
$
(weak duality), and
$
\gamma^* = \inf \left\{ \|P\|_\Omega \; : \; P\in\Pi^*_n\right\}
$
if $\Omega$ is compact (strong duality).
\end{thm}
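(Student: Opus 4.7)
The plan is to prove weak duality by direct substitution into the dual constraints, and strong duality by a Hahn--Banach separation argument between the affine subspace $\Pi_n^*$ and an appropriate open ball in $C(\Omega)$.

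For weak duality, take any dual-feasible pair $(L,\gamma)$ and any $P = \sum_{\a \in \NN^d_n} a_\a \x^\a + Q \in \Pi_n^*$. The constraints $L(\x\mapsto\x^\a) = \gamma$ for all $\a \in \NN^d_n$ and $L|_{\Pi_{n-1}^d}=0$ combine with linearity to yield $L(P) = \gamma\sum_{\a}a_\a + L(Q) = \gamma$. Using $\|L\|_{C(\Omega)^*}=1$ gives $\gamma = L(P) \le |L(P)| \le \|P\|_\Omega$, and taking the infimum over $P\in\Pi_n^*$ and the supremum over feasible $(L,\gamma)$ delivers $\gamma^* \le \mu^* \coloneqq \inf\{\|P\|_\Omega : P\in\Pi_n^*\}$.

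For strong duality, suppose first that $\mu^*>0$. The set $B\coloneqq\{h\in C(\Omega):\|h\|_\Omega<\mu^*\}$ is open, convex, and disjoint from $\Pi_n^*$ by the definition of $\mu^*$. Compactness of $\Omega$ guarantees that $\|\cdot\|_\Omega$ is a genuine norm on $C(\Omega)$, so the geometric Hahn--Banach theorem produces a nonzero $L\in C(\Omega)^*$ and a constant $c\in\RR$ with $L(h)<c$ on $B$ and $L(P)\ge c$ on $\Pi_n^*$. Writing $\Pi_n^*=P_0+V$ with linear direction $V=\{\sum_\a a_\a\x^\a+Q : \sum_\a a_\a=0,\ Q\in\Pi_{n-1}^d\}$, the inequality $L(P_0+v)\ge c$ applied to both $v$ and $-v$ for every $v\in V$ forces $L|_V=0$, which is precisely $L|_{\Pi_{n-1}^d}=0$ together with $L(\x^\a)=L(\x^\b)=c$ for all $\a,\b\in\NN^d_n$. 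The bound on $B$ gives $\mu^*\|L\|_{C(\Omega)^*}=\sup_{h\in B}L(h)\le c$, so rescaling $L$ to unit dual norm produces a feasible dual solution with value $c/\|L\|_{C(\Omega)^*}\ge\mu^*$. Combined with weak duality, $\gamma^*=\mu^*$.

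The main obstacle is the degenerate case $\mu^*=0$, where the open ball $B$ collapses and the separation step is vacuous. One handles this by exhibiting a nonzero $L\in C(\Omega)^*$ that annihilates the whole space $\Pi_n^d$ (such an $L$ exists whenever $\Pi_n^d$ is a proper subspace of $C(\Omega)$, a very mild nondegeneracy assumption on $\Omega$), normalizing it to $\|L\|_{C(\Omega)^*}=1$, and observing that it is then dual-feasible with $\gamma=0=\mu^*$. Aside from this boundary case, the proof is the standard function-space LP-duality pattern.
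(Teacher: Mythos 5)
Your argument is correct, but it is worth noting that the paper does not actually prove this theorem: it simply cites conic linear programming duality (Shapiro's Proposition 2.9) and omits the details. Your Hahn--Banach separation between the affine subspace $\Pi_n^*$ (viewed inside $C(\Omega)$) and the open ball $B=\{h:\|h\|_\Omega<\mu^*\}$ is exactly the elementary argument that underlies that citation, so what you have written is a legitimate, self-contained filling-in rather than a divergence in substance. The mechanics check out: dual feasibility of $L$ gives $L(P)=\gamma$ for every $P\in\Pi_n^*$, hence weak duality; separation plus the two-sided argument on the direction space $V$ forces $L$ to vanish on $\Pi_{n-1}^d$ and to take a common value on the monomials $\x^\a$, $\a\in\NN^d_n$; and $\sup_{h\in B}L(h)=\mu^*\|L\|_{C(\Omega)^*}\le c$ lets you rescale to a feasible dual pair with value at least $\mu^*$. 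One small imprecision: from $L(P_0+v)\ge c$ for all $v\in V$ you only get that the common value $L(\x^\a)$ is $\ge c$, not $=c$; this is harmless since you only use $c$ as a lower bound, but the equality as written is not justified.

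Your treatment of the degenerate case $\mu^*=0$ deserves credit rather than criticism: the nondegeneracy you invoke (existence of a nonzero $L\in C(\Omega)^*$ annihilating $\Pi_n^d|_\Omega$, i.e.\ $\Pi_n^d|_\Omega\subsetneq C(\Omega)$) is genuinely needed, and in fact the theorem as stated in the paper fails without it. For example, if $\Omega$ is a single point, then $\mu^*=0$ while the dual feasible set is empty, because $L|_{\Pi_{n-1}^d}=0$ already forces $L=0$, which is incompatible with $\|L\|_{C(\Omega)^*}=1$; so $\gamma^*=-\infty\ne\mu^*$. The paper implicitly assumes $\Omega$ is rich enough (e.g.\ infinite) for this not to occur. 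Aside from that boundary issue, which is the paper's rather than yours, the proof is complete.
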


One may derive this result directly through conic linear programming duality theory,  e.g.\ \cite[Proposition 2.9]{Shapiro2001}, and we will omit the proof here.

 It is insightful, though, to make a link with classical Chebyshev approximation. To this end, we denote an optimal
solution of problem \eqref{def-intro-multi-2} by
\begin{equation}
\label{eq:P*}
    \x\mapsto P^* (\x) = \sum_{\a \in \NN^d_n} a^*_\a \x^{\a} + Q^*(\x) \quad\hbox{such that} \quad  \sum_{\a \in \NN^d_n} a^*_{\a} = 1, \qquad Q^* \in \Pi^d_{n-1}.
\end{equation}
Note that $P^* \in \Pi_{n}^*$.
Clearly,
\[
\|P^*\|_\Omega = \min_{Q \in \Pi_{n-1}^d} \left\|\sum_{\a \in \NN^d_n} a^*_\a \x^{\a} + Q(\x) \right\|_\Omega = \min_{Q \in \Pi_{n-1}^d} \left\|\sum_{\a \in \NN^d_n} a^*_\a \x^{\a} - Q(\x) \right\|_\Omega,
\]
while the latter problem is the classical Chebyshev problem of approximating the homogeneous polynomial $\x \mapsto \sum_{\a \in \NN^d_n} a^*_\a \x^{\a}$  from $\Pi_{n-1}^d$.

Recall the strong duality result for the classical Chebyshev approximation problem as given in the paper by Rivlin and Shapiro \cite[Corollary 2]{RS}.\footnote{In \cite[Corollary 2]{RS}, the objective is in fact
given as $\sup |L(f)|$, but the absolute value may be omitted without loss of generality, since $L$ is feasible for the dual problem if and only if $-L$ is feasible.}

\begin{thm}
For any $f \in C(\Omega)$, with $\Omega$ compact, one has the following
\begin{equation}
\label{eq:Chebyshev_strong_dual}
 \min_{Q \in \Pi_{n-1}^d} \left\|f - Q \right\|_\Omega
= \max_{ L \in C(\Omega)^*}
\bigg\{ L(f) : \;  L_{| \Pi_{n-1}^d} = 0 , \, \|L\|_{C(\Omega)^*} = 1\bigg\}.
\end{equation}
\end{thm}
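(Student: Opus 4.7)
The plan is to recognize \eqref{eq:Chebyshev_strong_dual} as an instance of the classical Hahn-Banach distance formula applied to the finite-dimensional (hence closed) subspace $Y := \Pi_{n-1}^d$ of the Banach space $X := C(\Omega)$ equipped with the sup norm $\|\cdot\|_\Omega$. The left-hand side is precisely $\mathrm{dist}(f, Y)$, and Hahn-Banach furnishes its dual representation as the supremum of $L(f)$ over norm-one annihilators of $Y$.

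Weak duality is immediate: for any feasible $L$ and any $Q \in \Pi_{n-1}^d$, one has $L(f) = L(f - Q) \le \|L\|_{C(\Omega)^*} \|f - Q\|_\Omega = \|f - Q\|_\Omega$, so $\sup L(f) \le \inf_Q \|f - Q\|_\Omega$. If $f \in Y$, both sides vanish and there is nothing left to prove.

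For strong duality when $f \notin Y$, I would set $\delta := \mathrm{dist}(f, Y) > 0$ and pass to the quotient $X/Y$ with norm $\|[x]\| = \inf_{Q \in Y} \|x - Q\|_\Omega$, in which $\|[f]\| = \delta$. On the one-dimensional subspace $\RR \cdot [f] \subset X/Y$, define $\phi(\lambda [f]) = \lambda \delta$; this functional has operator norm exactly $1$. Extend it via Hahn-Banach to $\tilde L \in (X/Y)^*$ with $\|\tilde L\| = 1$, and set $L := \tilde L \circ \pi$, where $\pi : X \to X/Y$ is the canonical projection. Then $L|_Y = 0$ automatically; $\|L\|_{C(\Omega)^*} \le 1$ because $\|\pi(x)\| \le \|x\|_\Omega$; the reverse bound $\|L\| \ge 1$ is obtained by choosing cosets where $\tilde L$ nearly attains its norm and lifting to representatives of almost-minimal $X$-norm; and finally $L(f) = \tilde L([f]) = \delta$, giving equality.

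Attainment of the supremum — which turns it into a maximum — follows from Banach-Alaoglu: since $\Omega$ is compact, the unit ball of $C(\Omega)^*$ is weak-$*$ compact, the constraints $L|_{\Pi_{n-1}^d} = 0$ carve out a weak-$*$ closed subset, the objective $L \mapsto L(f)$ is weak-$*$ continuous, and the normalization $\|L\| = 1$ may be freely relaxed to $\|L\| \le 1$ once $f \notin Y$ because the Hahn-Banach extension saturates the bound. Via Riesz representation one may further identify the optimal $L$ with a signed regular Borel measure on $\Omega$ of total variation one that annihilates $\Pi_{n-1}^d$, which is the form most useful for the extremal-signature discussion that follows. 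I do not anticipate a serious obstacle: the only delicate point is the quotient-norm construction above, which is entirely standard once the problem is framed as a subspace distance.
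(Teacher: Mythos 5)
Your argument is correct, but it is worth noting that the paper does not prove this theorem at all: it is quoted verbatim from Rivlin and Shapiro \cite[Corollary 2]{RS} (with a footnote explaining why the absolute value in $\sup|L(f)|$ may be dropped). What you have supplied is the standard self-contained proof of that classical duality formula, namely the Hahn--Banach distance formula $\mathrm{dist}(f,Y)=\max\{L(f): L\in Y^{\perp},\ \|L\|\le 1\}$ for the closed (finite-dimensional) subspace $Y=\Pi_{n-1}^d$ of $C(\Omega)$, carried out via the quotient $X/Y$. All the steps go through. Two small remarks. First, the reverse bound $\|L\|\ge 1$ does not require lifting almost-norming cosets: since $L(f-Q)=L(f)=\delta$ for every $Q\in\Pi_{n-1}^d$, taking the infimum of $\|f-Q\|_\Omega$ over $Q$ gives $\delta\le\|L\|\,\delta$ directly, hence $\|L\|\ge 1$; this also makes the Banach--Alaoglu appeal unnecessary, as your Hahn--Banach extension already attains the supremum with $\|L\|=1$ exactly, which is needed because the constraint is an equality rather than $\|L\|\le 1$. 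Second, your dismissal of the case $f\in\Pi_{n-1}^d$ ("both sides vanish") tacitly assumes the feasible set $\{L: L|_{\Pi_{n-1}^d}=0,\ \|L\|=1\}$ is nonempty, which requires $\Pi_{n-1}^d$ to restrict to a proper subspace of $C(\Omega)$; this is a degeneracy of the statement itself (e.g.\ $\Omega$ a single point) rather than of your proof, and is ignored by the paper and by Rivlin--Shapiro alike. Compared with the paper's citation, your route buys a self-contained functional-analytic derivation, at the cost of re-proving a classical result; the identification of the optimal $L$ with a signed measure via Riesz representation, which you mention at the end, is exactly what the paper exploits afterwards in its discussion of atomic dual solutions and extremal signatures.
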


From this, we can immediately deduce the following relation between the two dual problems.

\begin{lem}
\label{lemma:equiv_duals}
For $f(\x) = \sum_{\a \in \NN^d_n} a^*_\a \x^{\a}$ with $a^*_\a$ as defined in \eqref{eq:P*}, and $\Omega$ compact,
 any optimal solution of problem  \eqref{eq:gamma*} is also optimal for the dual problem in \eqref{eq:Chebyshev_strong_dual}.
\end{lem}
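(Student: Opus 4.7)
The plan is to show that any $L$ feasible and optimal for \eqref{eq:gamma*} is feasible for the dual in \eqref{eq:Chebyshev_strong_dual} and achieves the common optimal value. Since the constraints $L_{|\Pi_{n-1}^d} = 0$ and $\|L\|_{C(\Omega)^*} = 1$ appear verbatim in both dual problems, feasibility is immediate. The only thing to verify is that the objective $L(f)$ of \eqref{eq:Chebyshev_strong_dual} coincides with the optimal value of \eqref{eq:gamma*}.

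First I would compute $L(f)$ directly. Using linearity of $L$, the definition of $f$, and the constraints $L(\x \mapsto \x^\alpha) = \gamma^*$ for every $\alpha \in \NN^d_n$ together with $\sum_{\a \in \NN^d_n} a^*_\a = 1$, I would obtain
\[
L(f) \;=\; \sum_{\a \in \NN^d_n} a^*_\a \, L(\x \mapsto \x^\a) \;=\; \gamma^* \sum_{\a \in \NN^d_n} a^*_\a \;=\; \gamma^*.
\]

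Next I would invoke the two strong-duality results available. Theorem \ref{thm:strong_duality_1_3} gives $\gamma^* = \inf\{\|P\|_\Omega : P \in \Pi_n^*\} = \|P^*\|_\Omega$. On the other hand, by the identification
\[
\|P^*\|_\Omega \;=\; \min_{Q \in \Pi_{n-1}^d}\Big\| \textstyle\sum_{\a \in \NN^d_n} a^*_\a\,\x^\a - Q \Big\|_\Omega,
\]
noted immediately before the lemma, the Rivlin--Shapiro identity \eqref{eq:Chebyshev_strong_dual} applied to $f$ equates this minimum to the maximum value of the dual in \eqref{eq:Chebyshev_strong_dual}. Combining these two identities yields that the optimal value of \eqref{eq:Chebyshev_strong_dual} equals $\gamma^*$, and thus $L$, which attains $L(f) = \gamma^*$ while being feasible, is optimal for \eqref{eq:Chebyshev_strong_dual}.

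There is no serious obstacle here: the argument is a one-line computation plus a comparison of optimal values from the two strong-duality theorems already quoted. The only minor point to be careful about is ensuring that the optimum $\gamma^*$ in \eqref{eq:gamma*} is actually attained (so that an optimal $L$ exists to talk about); this follows from compactness of $\Omega$ and a standard weak-$*$ compactness argument on the unit ball of $C(\Omega)^*$, which I would invoke implicitly via the same strong-duality framework used for Theorem \ref{thm:strong_duality_1_3}.
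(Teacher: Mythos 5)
Your proof is correct and follows essentially the same route as the paper's: verify feasibility, compute $L(f)=\gamma^*$ from linearity and $\sum_{\a\in\NN^d_n}a^*_\a=1$, and match this against the common optimal value $\|P^*\|_\Omega$ supplied by the two strong-duality statements. The extra remark on attainment of $\gamma^*$ is a reasonable addition but not something the paper dwells on.
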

\begin{proof}
    Let $L^*$ denote an optimal solution of problem \eqref{eq:gamma*}.
    Then $L^*$ is feasible for the dual (maximization) problem in \eqref{eq:Chebyshev_strong_dual}. Using that $\sum_{\a \in \NN^d_n} a^*_{\a} = 1$, one has
    \[
L^*(f) = L^*\left(\sum_{\a \in \NN^d_n} a^*_\a \x^{\a} \right) = \sum_{\a \in \NN^d_n} a^*_\a L^*(\x^{\a}) = \sum_{\a \in \NN^d_n} a^*_\a \gamma^* = \gamma^* = \|P^*\|_\Omega,
    \]
    where the last equality is the strong duality relation in Theorem \ref{thm:strong_duality_1_3}.
    Since the optimal value of the primal problem \eqref{eq:Chebyshev_strong_dual} is also $\|P^*\|_\Omega$ when $f(\x) = \sum_{\a \in \NN^d_n} a^*_\a \x^{\a}$,
        it follows that $L^*$ is an optimal solution for the dual problem in \eqref{eq:Chebyshev_strong_dual}, as required.
\end{proof}
In the next subsection, we review the fact that the optimal dual solutions may be assumed to be atomic (discrete) without loss of generality.

\subsection{Atomic dual solutions and signatures}

We first recall a classical interpolation formula for linear functionals, as given in  \cite[Corollary 3]{RS}.
\begin{thm}
\label{thm:atomic_linear_functionals}
    Let $L$ be any linear functional on a finite dimensional subspace $V$ of $C(\Omega)$, with $\Omega$ compact.
    Then there exist points $\omega_1,\ldots,\omega_r \in \Omega$ with $r \le \mathrm{dim}(V)$, and non-zero
    scalars $\tau_1,\ldots,\tau_r$, such that, defining the point evaluation functionals
    \[
L_{\omega_i}(f) = f(\omega_i) \quad i = 1,\ldots,r,
    \]
    one has
    \begin{equation}
        L = \sum_{i=1}^r \tau_i L_{\omega_i}, \quad \|L\|_{C(\Omega)^*} = \sum_{i=1}^r |\tau_i|.
        \label{eq:atomic_sol_dual}
    \end{equation}
\end{thm}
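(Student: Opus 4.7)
The plan is to reduce the statement to a finite-dimensional extreme-point argument, following a classical Hahn--Banach/Tchakaloff strategy. First, I would use the Hahn--Banach theorem to extend $L$ from $V$ to a functional $\tilde L$ on all of $C(\Omega)$ with the same norm, then invoke the Riesz representation theorem (using compactness of $\Omega$) to obtain a signed regular Borel measure $\mu$ on $\Omega$ satisfying $\tilde L(f) = \int_\Omega f \, d\mu$ for all $f \in C(\Omega)$ and $|\mu|(\Omega) = \|L\|_{C(\Omega)^*}$.

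Next, I would pass from the general signed measure $\mu$ to an atomic one. Using the Jordan decomposition $\mu = \mu^+ - \mu^-$, I would apply Tchakaloff's theorem to $\mu^+$ and $\mu^-$ separately, relative to the enlarged subspace $V + \mathbb{R} \cdot \mathbf{1}$, so that the total mass of each part is preserved along with all integrals against elements of $V$. Subtracting the resulting positive atomic measures produces a finitely supported signed measure $\nu = \sum_{i=1}^s \tau_i \delta_{\omega_i}$ whose action on $V$ agrees with $L$ and whose total variation still equals $\|L\|_{C(\Omega)^*}$.

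Finally, I would run a perturbation/dimension-counting argument to drive the support size down to at most $\dim V$. Among all atomic representations of $L$ of the form $\sum_{i=1}^r \tau_i L_{\omega_i}$ with $\sum_{i=1}^r |\tau_i| = \|L\|_{C(\Omega)^*}$ and $\tau_i \neq 0$, select one with the fewest atoms $r$. If $r > \dim V$, the conditions $\sum_i \tau_i v(\omega_i) = L(v)$ for $v \in V$ are $\dim V$ linear equations on the $r$ weights, so their joint kernel admits a nonzero direction $(d_1,\dots,d_r)$. For small $|t|$, the signs of $\tau_i + t d_i$ coincide with those of $\tau_i$, and I would use minimality of the total variation at $t = 0$ to conclude $\sum_i \mathrm{sgn}(\tau_i)\, d_i = 0$. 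Hence $\sum_i |\tau_i + t d_i|$ is constant on an interval around $0$; pushing $t$ until the first weight vanishes produces a representation with strictly fewer atoms but the same total variation, contradicting the minimality of $r$.

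The main obstacle will be the bookkeeping in this last step, namely verifying that the total-variation minimality does force $\sum_i \mathrm{sgn}(\tau_i)\, d_i = 0$ and that the sign pattern is preserved on a sufficiently long interval to guarantee that some coefficient actually hits zero. Once these local-to-global details are settled, the asserted identities \eqref{eq:atomic_sol_dual} follow directly from the construction.
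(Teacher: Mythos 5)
Your argument is correct, and it is worth noting that the paper itself supplies no proof of this statement: it is quoted verbatim from Rivlin--Shapiro \cite[Corollary 3]{RS}. The classical proof there is a finite-dimensional convexity argument: identify $\Omega$ with its image in $V^*$ under $x\mapsto L_x|_V$, observe that the unit ball of the dual norm on $V^*$ is the convex hull of the compact set $\{\pm L_x|_V : x\in\Omega\}$, and apply Carath\'eodory's theorem to the boundary point $L/\|L\|$ (the boundary refinement is what yields $r\le\dim V$ rather than $\dim V+1$). Your route --- Hahn--Banach extension, Riesz representation, Jordan decomposition, Tchakaloff applied to $V+\mathbb{R}\cdot\mathbf{1}$, and then an exchange argument to prune atoms --- is a legitimate alternative; the exchange step is essentially the Carath\'eodory reduction in coordinates, and the worry you flag resolves exactly as you hope: since \emph{every} atomic representation $\sum_i\tau_i' L_{\omega_i}$ of $L$ on $V$ satisfies $\sum_i|\tau_i'|\ge\|L\|$, the piecewise-affine function $t\mapsto\sum_i|\tau_i+td_i|$ has a global minimum at $t=0$ and fixed signs nearby, forcing $\sum_i\sgn(\tau_i)\,d_i=0$, and it then stays equal to $\|L\|$ up to the first kink, where an atom dies. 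Two small points deserve a line each in a write-up: (i) the equality $\|L\|=\sum_i|\tau_i|$ after forming $\nu^+-\nu^-$ is not automatic from Tchakaloff (shared atoms could cancel) but follows from the same two-sided bound, $|\nu|(\Omega)\le\mu^+(\Omega)+\mu^-(\Omega)=\|L\|\le|\nu|(\Omega)$; (ii) you need the version of Tchakaloff's theorem valid for an arbitrary finite-dimensional space of continuous functions on a compact set (Richter/Bayer--Teichmann), not just the polynomial case --- this is standard but should be cited. What your approach buys is self-containedness and a construction that tracks the total variation explicitly; what the citation buys the paper is brevity.
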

As a consequence, there exists a discrete (atomic) solution to problem~\eqref{thm:strong_duality_1_3}.
We give a proof below only for the sake of completeness and later reference --- the type of argument we use is classical.
\begin{cor}\label{cor:extremal-points}
 There exist points $\omega_1,\ldots,\omega_r \in \Omega$ with $r \le \mathrm{dim}(\Pi_{n}^d)$, and non-zero scalars $\tau_1,\ldots,\tau_r$ with $\sum_{i=1}^r |\tau_i| = 1$, such that an optimal solution of problem~\eqref{thm:strong_duality_1_3} is given by~\eqref{eq:atomic_sol_dual}. Moreover, the points
   $\omega_1,\ldots,\omega_r$ are extremal points of any optimal solution $P^*$ to problem \eqref{def-intro-multi-2}, i.e.,
    \[
|P^*(\omega_i)| = \|P^*\|_\Omega   \quad \text{ for all } i = 1,\ldots,r.
    \]
\end{cor}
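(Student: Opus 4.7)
The plan is to deduce the atomic representation from Theorem~\ref{thm:atomic_linear_functionals} applied to an optimal dual functional, and then leverage strong duality together with Lemma~\ref{lemma:equiv_duals} to recognize the resulting atoms as extremal points of $P^*$.

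First I would fix an optimal $L^* \in C(\Omega)^*$ for the dual problem~\eqref{eq:gamma*}; existence is guaranteed by weak-$\ast$ compactness of the unit ball of $C(\Omega)^*$ together with compactness of $\Omega$, which is the same ingredient that underlies Theorem~\ref{thm:strong_duality_1_3}. I would then observe that the functional norm of the restriction $L^*|_{\Pi_n^d}$, with $\Pi_n^d$ equipped with $\|\cdot\|_\Omega$, must equal $1$: if it were strictly smaller, one could rescale the restriction and use Hahn--Banach to extend it to a functional on $C(\Omega)$ satisfying the constraints of~\eqref{eq:gamma*} but with a strictly larger value of $\gamma$, contradicting the optimality of $L^*$. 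Invoking Theorem~\ref{thm:atomic_linear_functionals} with $V = \Pi_n^d$ then furnishes points $\omega_1,\ldots,\omega_r \in \Omega$ with $r \le \dim(\Pi_n^d)$ and non-zero scalars $\tau_1,\ldots,\tau_r$ such that the atomic functional $\widetilde L \coloneqq \sum_{i=1}^r \tau_i L_{\omega_i}$ coincides with $L^*$ on $\Pi_n^d$ and satisfies $\|\widetilde L\|_{C(\Omega)^*} = \sum_{i=1}^r |\tau_i| = 1$. Since every constraint and the objective of~\eqref{eq:gamma*} depend only on the action of the functional on $\Pi_n^d$, the atomic functional $\widetilde L$ remains dual-feasible with the same value $\gamma^*$, hence is itself dual-optimal. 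This yields the first claim.

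For the extremality assertion, I would combine strong duality with Lemma~\ref{lemma:equiv_duals}. Writing any primal-optimal $P^* \in \Pi_n^*$ as in~\eqref{eq:P*} and setting $f(\x) \coloneqq \sum_{\a \in \NN^d_n} a^*_\a \x^\a$, Lemma~\ref{lemma:equiv_duals} gives $\widetilde L(f) = \gamma^* = \|P^*\|_\Omega$. Since $\widetilde L$ vanishes on $\Pi_{n-1}^d$, I can add $Q^*$ back in without changing the value, obtaining $\widetilde L(P^*) = \|P^*\|_\Omega$. Expanding via the atomic representation then yields
$$
\|P^*\|_\Omega \;=\; \sum_{i=1}^r \tau_i\, P^*(\omega_i) \;\le\; \sum_{i=1}^r |\tau_i|\,|P^*(\omega_i)| \;\le\; \Big(\sum_{i=1}^r |\tau_i|\Big)\|P^*\|_\Omega \;=\; \|P^*\|_\Omega,
$$
so every inequality must be an equality. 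Since each $\tau_i$ is non-zero, the second equality forces $|P^*(\omega_i)| = \|P^*\|_\Omega$ for every $i$, which is the desired extremality.

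The main obstacle I anticipate is the delicate interplay between the $C(\Omega)^*$-norm of $L^*$ (globally equal to $1$) and the dual norm of its restriction to $\Pi_n^d$: one must verify that these coincide, so that the atomic extension produced by Theorem~\ref{thm:atomic_linear_functionals} has total mass exactly $1$ and thus remains feasible for~\eqref{eq:gamma*}. Once this is settled by the scaling argument above, the extremality conclusion is essentially complementary slackness dressed up in Chebyshev language via Lemma~\ref{lemma:equiv_duals}, and the cardinality bound $r \le \dim(\Pi_n^d)$ is immediate from Theorem~\ref{thm:atomic_linear_functionals}.
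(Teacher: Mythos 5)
Your proposal is correct and follows essentially the same route as the paper: apply Theorem~\ref{thm:atomic_linear_functionals} to an optimal dual functional with $V=\Pi_{n}^d$, then use Lemma~\ref{lemma:equiv_duals} and the fact that $\|P^*\|_\Omega$ is a convex combination of the values $\pm P^*(\omega_i)$ to force $|P^*(\omega_i)|=\|P^*\|_\Omega$. Your additional verification that the norm of the restriction of $L^*$ to $\Pi_{n}^d$ equals $1$ (so that the atomic representation has total mass exactly $1$ and stays dual-feasible) addresses a point the paper passes over silently and is a sound refinement rather than a different approach.
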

\begin{proof}

    Let $L^*$ denote an optimal solution of problem
\eqref{thm:strong_duality_1_3}.
The first statement in the corollary now follows immediately from Theorem~\ref{thm:atomic_linear_functionals}, applied to $L^*$, and using $V = \Pi_{n}^d$, so that
\begin{equation}
\label{eq:L*}
    L^* = \sum_{i=1}^r \tau_i L_{\omega_i}, \quad  \sum_{i=1}^r |\tau_i| = 1.
\end{equation}

 It remains to show that the points  $\omega_1,\ldots,\omega_r$ are extremal points of any optimal
    solution $P^*$ to problem \eqref{def-intro-multi-2}.

By Lemma~\ref{lemma:equiv_duals}, $L^*$ is also an optimal solution for the dual problem in~\eqref{eq:Chebyshev_strong_dual}.
Thus, for any optimal solution $P^*$ of problem~\eqref{def-intro-multi-2}, one has
\[
\|P^*\|_\Omega = L^*(P^*) = \sum_{i=1}^r \tau_i P^*(\omega_i).
\]
Since $\sum_{i=1}^r |\tau_i| = 1$, $\|P^*\|_\Omega$ is a weighted average of the values $P^*(\omega_i)$ with $\tau_i > 0$ and $-P^*(\omega_i)$ with $\tau_i < 0$.
As a consequence, we have
\[
\|P^*\|_\Omega = \left\{
\begin{array}{cc}
\phantom{-}P^*(\omega_i) & \mbox{ if $\tau_i > 0$,} \\
-P^*(\omega_i) & \mbox{if $\tau_i <0$,}
\end{array}
\right.
\]
completing the proof.
\end{proof}

The extremal points in Corollary~\ref{cor:extremal-points} are usually called the \textit{support of an extremal signature}, defined along the line of \cite[Section 2.2]{Riv}
as follows.

\begin{defn}
\label{def:signature}
A \textit{signature} with finite support $S \subset \Omega$ is simply a (partition) function from $S$ to $\{\pm 1\}$.
A signature $\sigma$ with support $S$ is said to be \textit{extremal} for a subspace $V \subset C(\Omega)$ if there exist weights $\lambda_\omega > 0$, $\omega \in S$, such that $\sum_{\omega \in S} \lambda_\omega \sigma(\omega) v(\omega) = 0$ for all $v \in V$.
A signature $\sigma$ with support $S$ is said to be \textit{associated} with a function $g \in C(\Omega)$ if $S$ is included in the set  $\{\omega \in \Omega: |g(\omega)| = \|g\|_\Omega \}$ of extremal points of $g$  and if $\sigma(\omega) = \sgn(g(\omega))$ for all $\omega \in S$.
\end{defn}

Thus, the result of Corollary~\ref{cor:extremal-points} may be restated as the existence of a signature with support $S = \{\omega_1,\ldots,\omega_r \} $ where $r \le \mbox{dim}(\Pi_{n}^d)$, so that this signature is extremal for $\Pi_{n-1}^d$, and associated with every optimal solution of problem \eqref{def-intro-multi-2}. Therefore, we will simply refer to an {\it optimal signature} for problem \eqref{def-intro-multi-2}. Formally we have the following result.

\begin{prop}
\label{prop:signatures}
    Any atomic solution of the dual problem \eqref{thm:strong_duality_1_3}, say $L^*$ of the form \eqref{eq:L*}, gives rise to an optimal signature for problem
    \eqref{def-intro-multi-2}, by setting:
    \[
    S = \{\omega_1,\ldots,\omega_r\}, \quad \sigma(\omega_i) = \sgn(\tau_i), \quad \lambda_i = |\tau_i|, \qquad i = 1,\ldots,r.
    \]
\end{prop}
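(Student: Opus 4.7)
The plan is to verify directly that the triple $(S,\sigma,\lambda)$ defined in the statement satisfies the two conditions of Definition~\ref{def:signature}, namely that $\sigma$ is an extremal signature for $V = \Pi_{n-1}^d$ and that it is associated with any optimal solution $P^*$ of problem~\eqref{def-intro-multi-2}. The key identity that drives the whole argument is
\[
\lambda_i \, \sigma(\omega_i) \;=\; |\tau_i|\,\sgn(\tau_i) \;=\; \tau_i, \qquad i = 1,\ldots,r,
\]
which lets us move freely between the atomic dual solution $L^*$ and the signature data $(S,\sigma,\lambda)$.

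First, I would check that $(S,\sigma,\lambda)$ is well-defined in the sense of Definition~\ref{def:signature}: the set $S$ is finite, each $\tau_i$ is nonzero (so $\sigma(\omega_i) \in \{\pm 1\}$ and $\lambda_i > 0$), and $S \subset \Omega$. Second, for extremality with respect to $\Pi_{n-1}^d$, I would take an arbitrary $v \in \Pi_{n-1}^d$ and apply the identity above to obtain
\[
\sum_{\omega \in S} \lambda_\omega \, \sigma(\omega)\, v(\omega) \;=\; \sum_{i=1}^r \tau_i\, v(\omega_i) \;=\; L^*(v) \;=\; 0,
\]
the last equality following from feasibility of $L^*$ in~\eqref{eq:gamma*}, where the constraint $L^*_{|\Pi_{n-1}^d} = 0$ is imposed. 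Third, to prove association with an arbitrary optimal $P^* \in \Pi_n^*$ of~\eqref{def-intro-multi-2}, I would invoke Corollary~\ref{cor:extremal-points}, which already establishes both that $|P^*(\omega_i)| = \|P^*\|_\Omega$ for all $i$ (so $S$ is contained in the set of extremal points of $P^*$) and that $P^*(\omega_i) = \sgn(\tau_i)\,\|P^*\|_\Omega$; thus $\sgn(P^*(\omega_i)) = \sgn(\tau_i) = \sigma(\omega_i)$ for every $\omega_i \in S$.

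The proof is therefore essentially a bookkeeping exercise rather than a substantive new argument: the content of the proposition has already been extracted inside the proof of Corollary~\ref{cor:extremal-points}, and what remains is only to recast the atomic dual solution in the signature language of Definition~\ref{def:signature}. The one point worth flagging explicitly is that the phrase \emph{optimal signature} is justified precisely because the association holds for \emph{every} optimal $P^*$, not just for some distinguished one; this uniform statement comes for free from the fact that $L^*$ itself does not depend on the choice of $P^*$.
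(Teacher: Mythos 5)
Your argument is correct and follows exactly the route the paper intends: the paper states Proposition~\ref{prop:signatures} without a separate proof, remarking only that it is a reformulation of the preceding material, and your verification (extremality via $L^*_{|\Pi_{n-1}^d}=0$ together with the identity $\lambda_i\sigma(\omega_i)=\tau_i$, and association via the sign analysis already carried out in the proof of Corollary~\ref{cor:extremal-points}) is precisely the bookkeeping being left implicit. No gaps.
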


This is essentially a reformulation of the well-known characterization of best approximation by polynomials in terms of the extremal signature (\cite[Theorem 2.6]{Riv}) for the dual problem.

\subsection{Signatures for diagonally-determined sets}
For the special case where $\Omega$ is a diagonally-determined set, we may now infer information about an optimal signature from Theorem \ref{thm:supported} and Proposition \ref{prop:signatures}.
The idea of the proof is to construct an atomic optimal solution for the dual problem \eqref{thm:strong_duality_1_3}.

\begin{thm}
\label{thm:sinature_diag_supp}
    Let $\Omega$ be a diagonally-determined set with $\mathrm{diag}(\Omega) = [a,b]$ and vector $\v$. Then, an optimal signature for problem~\eqref{def-intro-multi-2} is defined by a set $S$, of $n+1$ points
    \begin{equation}    \label{eq:tau_diag_supp}
\omega_j = a\mathbf{1} + \frac{b-a}{2}\left(1 + \cos\left(\frac{(j-1)\pi}{n} \right) \right){\mathbf{1}} \quad \text{ for } j= 1,\ldots,n+1,     \end{equation}
where ${\mathbf{1}}$  again  denotes the all-ones vector in $\mathbb{R}^d$, together with the
partition function $\sigma:S \rightarrow \{-1,1\}$, given by
\[
\sigma(\omega_j) =
\left\{
\begin{array}{cc}
\phantom{-}1 & \mbox{if $j$ is odd} \\
-1  & \mbox{if $j$ is even,}
\end{array}
\right.
\]
as well as $\lambda_1 = \lambda_{n+1} = \frac{1}{2}$, and $\lambda_i = 1$ if $i=2,\ldots, n$.
\end{thm}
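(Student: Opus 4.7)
The plan is to apply Proposition~\ref{prop:signatures}: we construct an atomic optimal solution $L^* \in C(\Omega)^*$ of the dual problem~\eqref{eq:gamma*} supported exactly on $\{\omega_1,\ldots,\omega_{n+1}\}$, so that the signature delivered by the proposition matches (up to a harmless positive rescaling of the weights) the one given in the theorem.

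Concretely, set $\tau_j := \tfrac{1}{n}\sigma(\omega_j)\lambda_j$ and define $L^*(f) := \sum_{j=1}^{n+1} \tau_j f(\omega_j)$, so that $\sum_{j} |\tau_j| = 1$ by construction. Each $t_j := a + \tfrac{b-a}{2}\bigl(1+\cos\tfrac{(j-1)\pi}{n}\bigr)$ lies in $[a,b] = \mathrm{diag}(\Omega)$, hence $\omega_j = t_j\mathbf{1}\in \Omega$, and $L^* \in C(\Omega)^*$ with $\|L^*\|_{C(\Omega)^*} \le \sum_{j}|\tau_j| = 1$. Moreover, since $\omega_j^\alpha = t_j^n$ for every $\alpha \in \NN^d_n$, the quantity $L^*(\x\mapsto\x^\alpha) = \sum_j \tau_j t_j^n$ is independent of $\alpha$; splitting $t^n$ as the sum of its best univariate approximation of degree at most $n-1$ on $[a,b]$ and the residual $\gamma^*\, T_n\!\bigl(\tfrac{2(t-a)}{b-a}-1\bigr)$ with $\gamma^* := \bigl(\tfrac{b-a}{2}\bigr)^n 2^{-(n-1)}$, and using $T_n(\cos\tfrac{(j-1)\pi}{n})=(-1)^{j-1}$ together with the orthogonality proven below, one finds $L^*(\x\mapsto\x^\alpha) = \gamma^*$.

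The main obstacle is establishing $L^*|_{\Pi_{n-1}^d} = 0$. Any $Q \in \Pi_{n-1}^d$ restricts on the diagonal to a univariate polynomial $q(t) := Q(t\mathbf{1})$ of degree at most $n-1$, so it suffices to prove $\sum_{j=1}^{n+1}\tau_j\, q(t_j) = 0$ for every $q \in \Pi_{n-1}$. After the affine change of variable $s = \tfrac{2(t-a)}{b-a}-1$, which sends $t_j$ to the Chebyshev extremum $s_j = \cos\tfrac{(j-1)\pi}{n}$, this becomes the classical Chebyshev--Gauss--Lobatto discrete orthogonality. One may verify it directly by writing $(-1)^{j-1} = T_n(s_j)$, expanding $q$ in the Chebyshev basis $T_0,\ldots,T_{n-1}$, and applying $2\,T_n(s_j)T_k(s_j) = \cos((n+k)\theta_j)+\cos((n-k)\theta_j)$ with $\theta_j = (j-1)\pi/n$: for $k \in \{0,\ldots,n-1\}$ both indices $n\pm k$ fall strictly between $0$ and $2n$, so each resulting sum vanishes by the elementary identity $\tfrac{1}{2}\cos 0 + \sum_{\ell=1}^{n-1}\cos(m \ell \pi/n) + \tfrac{1}{2}\cos(m\pi) = 0$ valid for $0 < m < 2n$. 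The half-weights at the endpoints are exactly the prescribed $\lambda_1 = \lambda_{n+1} = 1/2$.

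With $L^*$ dual-feasible and attaining $L^*(P^*/\|P^*\|_\Omega) = \sum_j \tau_j(-1)^{j-1} = 1$, strong duality (Theorem~\ref{thm:strong_duality_1_3}) together with $\|P^*\|_\Omega = \gamma^*$ from Theorem~\ref{thm:supported} shows that $L^*$ is an atomic optimal dual solution and $\|L^*\|_{C(\Omega)^*}=1$. Proposition~\ref{prop:signatures} then yields an optimal signature with support $\{\omega_j\}$, signs $\sgn(\tau_j) = (-1)^{j-1} = \sigma(\omega_j)$, and weights $|\tau_j|$; since the extremality identity in Definition~\ref{def:signature} is insensitive to a positive rescaling of the $\lambda_\omega$'s, multiplying through by $n$ produces exactly the weights listed in the theorem.
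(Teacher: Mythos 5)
Your proposal is correct and follows essentially the same route as the paper: the same atomic dual functional supported at the diagonal images of the Chebyshev--Gauss--Lobatto points with trapezoidal weights, feasibility via discrete Chebyshev orthogonality, optimality via the weak-duality certificate $L^*(P^*)=\|P^*\|_\Omega$, and the conclusion through Proposition~\ref{prop:signatures}. The one place you streamline is the verification that $L^*$ vanishes on $\Pi^d_{n-1}$: since all the $\omega_j$ lie on the diagonal you reduce at once to a univariate polynomial of degree at most $n-1$ and prove the discrete orthogonality from elementary trigonometric identities, whereas the paper argues via products $T_{k_1}(x_1)T_{k_2}(x_2)$ and induction on $d$ --- your reduction is cleaner, and you also correctly flag the harmless factor-$n$ rescaling between the weights $|\tau_j|$ and the $\lambda_j$ stated in the theorem, a point the paper glosses over.
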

\begin{proof}
 By Theorem \ref{thm:supported}, an optimal solution to problem
 \eqref{def-intro-multi-2} is given by the polynomial $P^*$ in \eqref{eq:minimizer}, namely
 \[
 \x \mapsto P^*(\x) = \left(\frac{b-a}{2}\right)^n \frac{1}{2^{n-1}}   T_n\left(- 1+ 2 \,\frac{\langle \v,\x\rangle -a}{b-a}\right).
 \]
 Using the well-known fact that the extremal points of $T_n$ are given by the Gauss-Lobatto-Chebyshev points $\xi_j \coloneqq \cos\left((j-1)\pi/n\right)$ for $j=1,\ldots, n+1$,
  we have that the points listed in \eqref{eq:tau_diag_supp} are extremal points of $P^*$. Indeed, using $\langle \v,\mathbf{1}\rangle = 1$, one has
\[
\langle \v,\omega_j\rangle  = a + \frac{b-a}{2}\left(1 + \cos\left(\frac{(j-1)\pi}{n} \right) \right) \quad \text{ for } j= 1,\ldots,n+1,
\]
which is the same as
\[
- 1+ 2 \,\frac{\langle \v,\omega_j\rangle -a}{b-a} = \xi_j  \text{ for } j=1,\ldots, n+1.
\]

 Next,  we will construct an atomic solution of the dual problem~\eqref{thm:strong_duality_1_3}, which will lead to the required optimal signature, by Proposition~\ref{prop:signatures}.

 To this end, define the linear operator
 \[
 \rev{
    L^* = \frac{1}{n}\sum_{i=1}^{n+1} (-1)^{i+1}\lambda_iL_{\omega_i},
    }
 \]
 so that $\|L^*\|_{C(\Omega)^*} = \frac{1}{n}\sum_{i=1}^{n+1} |(-1)^{i+1}\lambda_i| = 1$, by construction.

 We claim that $L^*$ is an optimal solution of the dual problem \eqref{thm:strong_duality_1_3}. We first verify feasibility.
  We will show that $L^*$ vanishes on $\Pi^d_{n-1}$, as required, by using the Gauss-Lobatto-Chebyshev quadrature formula (see e.g.\ \cite[Exercise 1.5.29]{Riv}), which implies, for $k<n$:
\rev{
 \[
0= \int_{-1}^1 T_n(t)T_k(t)\frac{1}{\sqrt{1-t^2}}dt = \frac{\pi}{n}\sum_{j=1}^{n+1} \lambda_j T_n(\xi_j)T_k(\xi_j),
 \]
 }
 due to the orthogonality of Chebyshev polynomials. Using $T_n(\xi_j) = (-1)^{1+j}$ yields
 \rev{
\begin{equation}
\label{eq:orthogonal}
L^*(T_k) = \frac{1}{n}\sum_{j=1}^{n+1} \lambda_j (-1)^{1+j}T_k(\xi_j) = 0 \quad \mbox{ if $k<n$.}
\end{equation}
}
Thus $L^*$ vanishes on
univariate (and separable) polynomials in $\Pi^d_{n-1}$. For the general case, it suffices to consider  $d=2$ and the monomial $p(x_1,x_2) = T_{k_1}(x_1)T_{k_2}(x_2)$, where $k_1+k_2 < n$, and show that $L^*(p)  = 0$. To this end, note that
\rev{
\begin{eqnarray*}
L^*(p) &=& \frac{1}{n}\sum_{j=1}^{n+1} \lambda_j (-1)^{1+j}T_{k_1}(\xi_j)T_{k_2}(\xi_j) \\
       &=& \frac{1}{n}\sum_{j=1}^{n+1} \lambda_j (-1)^{1+j} \frac{1}{2}\left(T_{|k_1-k_2|}(\xi_j)+T_{k_1+k_2}(\xi_j)\right) \\
        &=& \frac{1}{2n}\sum_{j=1}^{n+1} \lambda_j (-1)^{1+j} T_{|k_1-k_2|}(\xi_j)+ \frac{1}{2n}\sum_{j=1}^{n+1} \lambda_j (-1)^{1+j}T_{k_1+k_2}(\xi_j) = 0,\\
\end{eqnarray*}
}
using \eqref{eq:orthogonal}. The result for general $d$ now follows by induction.

Secondly, we verify that $L^*(\mathbf{x}^\alpha)$ is independent of $\alpha$, when $|\alpha| = n$, say  $L^*(\mathbf{x}^\alpha) = \hat \gamma$ whenever $\alpha \in \NN^d_n$. This follows immediately from the construction of $L^*$, since all coordinates of the vectors $\omega_i$ are equal for all $i=1,\ldots, n+1$.
Finally, to prove optimality, we note that, for $P^*$ written in the form \eqref{eq:P*},
\[
\|P^*\|_\Omega = L^*(P^*) = \sum_{\a \in \NN^d_n}a^*_\alpha L^*(\mathbf{x}^\alpha) = \hat \gamma\sum_{\a \in \NN^d_n}a^*_\alpha = \hat \gamma,
\]
so, by the weak duality theorem,   $L^*$ is indeed optimal. The required result now follows from Proposition~\ref{prop:signatures}.
\end{proof}

\begin{rem}
    It is interesting to note that --- for diagonally-determined sets --- there exists an optimal signature for problem~\eqref{def-intro-multi-2} with support size $n+1$. One can compare this to the general result for compact $\Omega$ of a signature support size of $\mbox{dim}(\Pi^d_n) = {n+d \choose d}$.
    Remarkably the minimum support size is independent of $d$  in the diagonally-determined case. Moreover, we did not use compactness of $\Omega$ in the proof of Theorem~\ref{thm:sinature_diag_supp},
     whereas compactness is needed for the general bound. Finally, note that the optimal signature in Theorem \ref{thm:sinature_diag_supp} depends on $\Omega$ only through its diagonal $[a,b]$, and is independent of $\v$.
\end{rem}

\section{Detecting the diagonally-determined property}
\label{sec:detection}
The goal of this section is to provide  a numerical scheme to detect whether a given compact basic semi-algebraic set
\begin{equation}
\label{set-omega}
\Omega\,:=\,\{\,\x\in \mathbb{R}^d: g_j(\x)\geq0\,,\quad j=1,\ldots,m\,\}
\end{equation}
for some given polynomials $g_j$, is diagonally-determined.

The {\em quadratic module} of  $g_1,\ldots,g_m$, denoted by $\mathcal{Q}(g_1,\ldots,g_m)$, is defined as the set of all polynomials of the form
\begin{equation}
\label{eq:quadratic module}
p\,=\,\sigma_0+\sigma_1\,g_1+\cdots +\sigma_m\,g_m\,
\end{equation}
for some sum-of-squares polynomials $\sigma_0,\sigma_1,\ldots,\sigma_m$. Clearly, if $p \in \mathcal{Q}(g_1,\ldots,g_m)$, then $p(\x) \ge 0$ for all $\x \in \Omega$.

\begin{assumption}
\label{assumption:Archimedean}
We  assume that $\mathcal{Q}(g_1,\ldots,g_m)$ satisfies  the Archimedean condition,
meaning  that
there exists   $p \in \mathcal{Q}(g_1,\ldots,g_m)$ such that
$$
\left\{ \x \in \mathbb{R}^d \; : \; p(\x) \ge 0\right\} \mbox{ is compact.}
$$
\end{assumption}

Under this assumption, if a polynomial is (strictly) positive on $\Omega$, then it belongs to $\mathcal{Q}(g_1,\ldots,g_m)$.
We state this result --- known as Putinar's Positivstellensatz --- as a theorem for later use.

\begin{thm}[Putinar \cite{Put}]
\label{thm:Putinar}
Assume $\Omega$ is of the form \eqref{set-omega} and that Assumption \ref{assumption:Archimedean} holds.
If $p \in \Pi^d_n$ satisfies $p(\x) > 0$ for all $\x \in \Omega$, then $p \in \mathcal{Q}(g_1,\ldots,g_m)$.
\end{thm}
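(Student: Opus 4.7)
The plan is to prove Theorem \ref{thm:Putinar} by contradiction, following Putinar's original duality-and-representation strategy. Suppose $p > 0$ on $\Omega$ but $p \notin \mathcal{Q}(g_1,\ldots,g_m)$. Since $\mathcal{Q}(g_1,\ldots,g_m)$ is a convex cone in the real vector space $\mathbb{R}[\x]$ containing $1 = 1^2$, a Hahn-Banach separation argument (applied in a suitable topology on $\mathbb{R}[\x]$, or by passing to finite-dimensional slices and taking a limit) produces a nonzero linear functional $L \colon \mathbb{R}[\x] \to \mathbb{R}$ with $L(q) \ge 0$ for every $q \in \mathcal{Q}(g_1,\ldots,g_m)$ and $L(p) \le 0$. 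After normalizing so that $L(1) = 1$, the goal reduces to exhibiting a probability measure $\mu$ on $\Omega$ satisfying $L(q) = \int q\,d\mu$ for every $q$; the strict positivity of $p$ on $\Omega$ will then force $L(p) > 0$, the desired contradiction.

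The Archimedean assumption is the key to constructing $\mu$. A standard reformulation shows that Assumption \ref{assumption:Archimedean} is equivalent to the existence of a constant $N > 0$ with $N - \sum_{i=1}^d x_i^2 \in \mathcal{Q}(g_1,\ldots,g_m)$, and applying $L$ yields $L(x_i^2) \le N$ for every $i$. More generally, the bilinear form $(r,s) \mapsto L(rs)$ is positive semidefinite on $\mathbb{R}[\x]$ (because $L$ is nonnegative on sums of squares), and combining the Cauchy-Schwarz inequality with bounds coming from $N^k - \|\x\|^{2k}$-type elements of $\mathcal{Q}(g_1,\ldots,g_m)$ yields moment estimates of the form $|L(\x^\alpha)| \le C\,N^{|\alpha|/2}$. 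These bounds are exactly what makes the ensuing operator-theoretic step work.

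Next I would carry out the GNS construction. The pairing $\langle r, s\rangle \coloneqq L(rs)$ descends to an inner product on the quotient of $\mathbb{R}[\x]$ by the null ideal $\{r : L(r^2) = 0\}$, and its completion is a Hilbert space $H$ on which the coordinate multiplications $X_i \colon r \mapsto x_i r$ act. The Archimedean estimates translate into boundedness of each $X_i$; these operators are clearly pairwise commuting and symmetric, hence bounded commuting self-adjoint. By the joint spectral theorem, a unique projection-valued measure $E$ on $\mathbb{R}^d$ then satisfies $q(X_1,\ldots,X_d) = \int q\,dE$ for every polynomial $q$, and setting $\mu(A) \coloneqq \langle E(A)\mathbf{1}, \mathbf{1}\rangle$ produces a Borel probability measure with $L(q) = \int q\,d\mu$ for all $q \in \mathbb{R}[\x]$.

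Finally, I would show $\mathrm{supp}(\mu) \subseteq \Omega$. For each $j$ and any $h \in \mathbb{R}[\x]$, the polynomial $g_j h^2$ lies in $\mathcal{Q}(g_1,\ldots,g_m)$, so $\int g_j h^2\,d\mu = L(g_j h^2) \ge 0$; since $h$ is arbitrary, $g_j \ge 0$ holds $\mu$-almost everywhere, giving $\mathrm{supp}(\mu) \subseteq \Omega$. Combining $p > 0$ on $\Omega$ with $\mu(\Omega) = L(1) = 1$ yields $L(p) = \int_\Omega p\,d\mu > 0$, contradicting $L(p) \le 0$. The hardest part will be the operator-theoretic step of the third paragraph: verifying boundedness of the $X_i$ from the Archimedean moment estimates, establishing self-adjointness in the completion, and correctly tracing the support condition through the joint spectral decomposition. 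Everything else is fairly mechanical once the representing measure is in hand.
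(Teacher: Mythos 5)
The paper does not actually prove this statement: it is quoted verbatim from Putinar \cite{Put} and used as a black box, so there is no ``paper proof'' to compare against. Your outline is the classical proof of Putinar's Positivstellensatz (separation, GNS construction, joint spectral theorem, support localization), and as a roadmap it is faithful to how the theorem is really proved; the final contradiction $L(p)=\int_\Omega p\,d\mu>0$ versus $L(p)\le 0$ is exactly right, as is the localization $\mathrm{supp}(\mu)\subseteq\Omega$ via $L(g_jh^2)\ge 0$ (using that $\mu$ has compact support so polynomials are dense enough to pass from $h^2$ to arbitrary nonnegative continuous test functions).

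Two steps in your sketch are glibber than they can afford to be. First, the separation argument: Hahn--Banach in the infinite-dimensional space $\mathbb{R}[\x]$ does not come for free, and ``passing to finite-dimensional slices and taking a limit'' gives no control on the normalization of the limiting functional. The clean route is to observe that the Archimedean property (in the form: for every $q$ there is $N$ with $N\pm q\in\mathcal{Q}(g_1,\ldots,g_m)$) makes $1$ an algebraic interior point (order unit) of the cone, so Eidelheit--Kakutani separation applies; the same property also rules out $L(1)=0$ (else $|L(q)|\le N\,L(1)=0$ for all $q$, so $L=0$), which you need before you can normalize to $L(1)=1$. So the Archimedean hypothesis enters already at the separation stage, not only in the moment estimates. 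Second, the ``standard reformulation'' of Assumption \ref{assumption:Archimedean} --- passing from the paper's form (some $p\in\mathcal{Q}(g_1,\ldots,g_m)$ with $\{p\ge 0\}$ compact) to $N-\sum_i x_i^2\in\mathcal{Q}(g_1,\ldots,g_m)$ --- is itself a nontrivial theorem (usually derived from Schm\"udgen's Positivstellensatz or W\"ormann's argument), not a cosmetic rewording; in a full write-up it would have to be cited or proved. With those two points made precise, your outline is a correct and complete skeleton of the standard proof.
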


We also introduce the concept of the {\em truncated quadratic module} of degree $k \in \mathbb{N}_0$, denoted by $\mathcal{Q}(g_1,\ldots,g_m)_k$,
as the subset of $\mathcal{Q}(g_1,\ldots,g_m)$ where each term on the right-hand-side of \eqref{eq:quadratic module} has degree at most $k$, i.e.\
deg$(\sigma_0) \le k$, and deg$(\sigma_jg_j) \le k$ for all $j = 1,\ldots,m$. The membership problem for $\mathcal{Q}(g_1,\ldots,g_m)_k$ may
 be reformulated as a semidefinite programming feasibility problem; see Lasserre \cite{lass-siopt-01}, and the references therein.

\subsection*{Finding the diagonal $[a,b]$}
We first note that one may find the diagonal of $\Omega$ by finding the real roots of the univariate polynomials
\[
t \mapsto p_j(t) := g_j(t \mathbf{1}), \quad j=1,\ldots,m.
\]
Indeed, for each $p_j$ we may find the set of all (possibly unbounded) intervals where it is nonnegative,
and subsequently take the intersection of all these sets for $j=1,\ldots,m$.
If the result is a single interval $[a,b]$, then $\mathrm{diag}(\Omega) = [a,b]$, and we may proceed with the next step, since the first property holds in Definition \ref{def:diag_concentration}.

\subsection*{Once $[a,b]$ is known}
We assume that we now know $[a,b]=\mathrm{diag}(\Omega)$, and that $a < 0 < b$. The latter assumption is without loss of generality, due to Lemma \ref{lemma:translation}.
Then, to detect whether the second property holds in Definition \ref{def:diag_concentration}, consider the optimization problem:
\begin{equation}
\label{detection-v}
%\begin{array}{rl}
\rho=\displaystyle\min_{\v'} \,\{\,|1-\langle\v',\mathbf{1}\rangle|:\:
a\leq\langle\v',\x\rangle\,\leq\,b\quad\,\forall \x\in\Omega\,\}\,.
%\end{array}
\end{equation}
Then $\Omega$ is diagonally-determined if and only  if $\rho = 0$. Note that one may in fact omit the absolute value in the objective of \eqref{detection-v},
since $\x = b\mathbf{1} \in \Omega$, and $\langle\v',\x\rangle \leq b$ therefore implies $\langle\v',\mathbf{1}\rangle \leq 1$ for all $\v'$ that are feasible for \eqref{detection-v}.

We now relax this problem to a sequence of semidefinite programs, using the approach introduced by Lasserre \cite{lass-siopt-01}.
The key idea is to replace the nonnegativity conditions in \eqref{detection-v} by the sufficient condition of membership of a truncated quadratic module.
To this end, we introduce the two affine functions, for given $\v' \in \mathbb{R}^d$:
\[
\x \mapsto b-\langle\v',\x\rangle = \ell_{\v',b}(\x), \; \x \mapsto \langle\v',\x\rangle-a = \ell_{\v',a}(\x),
\]
to obtain the reformulation of problem \eqref{detection-v} as
\[
\rho=\min_{\v'} \,\{\,1-\langle\v',\mathbf{1}\rangle:\:
\ell_{\v',a}(\x),\ell_{\v',b}(\x) \ge 0 \quad\,\forall \x\in\Omega\,\}\,.
\]
For every $k\in \mathbb{N}_0$, we now define:
\begin{equation}
\label{detection-v-relax}
\rho_k =  \min_{\v'}\left\{  1-\langle\v',\mathbf{1}\rangle \; : \;  \ell_{\v',a},\ell_{\v',b} \in \mathcal{Q}(g_1,\ldots,g_m)_k\right\}.
\end{equation}
Importantly, and as already mentioned, we emphasize that the optimization problem \eqref{detection-v-relax} is a
 (convex) semidefinite program that can be solved efficiently with arbitrary precision (fixed in advance),
 under mild assumptions, using, e.g., interior point methods; see Lasserre \cite{lass-siopt-01}.

\begin{prop}
Assume $\Omega$ is of the form \eqref{set-omega}, that Assumption \ref{assumption:Archimedean} holds, and that
$\mathrm{diag}(\Omega) = [a,b]$ where $a < 0 < b$.
Then one has $\rho_k \in [0,1]$ for all $k$. Moreover, if  $\Omega$ is diagonally-determined,
then the non-increasing sequence $\{\rho_k\}$ converges to zero.
If $\rho_k = 0$ for some finite value of $k$, then
the optimal $\v'\in \mathbb{R}^d$ of \eqref{detection-v-relax} certifies that $\Omega$ is diagonally-determined with vector $\v = \v'$ and diagonal $[a,b]$.
\end{prop}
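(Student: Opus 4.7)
The plan is to dispatch the four assertions in the order in which they appear. First, I will handle the two-sided bound $\rho_k \in [0,1]$. Any $\v'$ feasible for \eqref{detection-v-relax} satisfies $\ell_{\v',b}(\x) \ge 0$ on $\Omega$, since membership in the quadratic module forces nonnegativity on $\Omega$. Specializing at $\x = b\mathbf{1}$, which lies in $\Omega$ because $b \in \mathrm{diag}(\Omega)$, produces $b(1 - \langle\v',\mathbf{1}\rangle) \ge 0$, and $b > 0$ then gives $1 - \langle\v',\mathbf{1}\rangle \ge 0$, so $\rho_k \ge 0$. For the upper bound, I will exhibit the feasible point $\v' = \mathbf{0}$: it makes $\ell_{\v',a} \equiv -a$ and $\ell_{\v',b} \equiv b$ into positive constants, hence single squares of real numbers and therefore elements of $\mathcal{Q}(g_1,\ldots,g_m)_0 \subseteq \mathcal{Q}(g_1,\ldots,g_m)_k$, with objective value $1$. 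Monotonicity of $\{\rho_k\}$ is then immediate from the inclusion $\mathcal{Q}(g_1,\ldots,g_m)_k \subseteq \mathcal{Q}(g_1,\ldots,g_m)_{k+1}$, which enlarges the feasible set level by level.

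Next, I will establish convergence to zero when $\Omega$ is diagonally-determined. I will take the vector $\v$ promised by Definition \ref{def:diag_concentration} and, for $\delta \in (0,1)$, consider the rescaled candidate $\v_\delta := (1-\delta)\v$. Using $\langle\v,\x\rangle \in [a,b]$ for every $\x \in \Omega$ together with $a<0<b$, I get the uniform strict bounds
\[
\ell_{\v_\delta,b}(\x) \ge b - (1-\delta)b = \delta b > 0, \qquad \ell_{\v_\delta,a}(\x) \ge (1-\delta)a - a = -\delta a > 0,
\]
valid on $\Omega$. Putinar's Positivstellensatz (Theorem \ref{thm:Putinar}) then places both $\ell_{\v_\delta,a}$ and $\ell_{\v_\delta,b}$ in $\mathcal{Q}(g_1,\ldots,g_m)$, and therefore in the truncated module $\mathcal{Q}(g_1,\ldots,g_m)_{k_\delta}$ for some $k_\delta \in \mathbb{N}_0$. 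Hence $\v_\delta$ is feasible at level $k_\delta$ with objective value $\delta$, giving $\rho_{k_\delta} \le \delta$; combined with monotonicity, this yields $\rho_k \le \delta$ for all $k \ge k_\delta$, and letting $\delta \to 0$ forces $\rho_k \to 0$.

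Finally, I will prove the certification statement. If $\rho_k = 0$ is attained at some finite $k$ by an optimizer $\v'$, then $\langle\v',\mathbf{1}\rangle = 1$, while the memberships $\ell_{\v',a}, \ell_{\v',b} \in \mathcal{Q}(g_1,\ldots,g_m)_k$ force $a \le \langle\v',\x\rangle \le b$ for every $\x \in \Omega$. Together with the standing hypothesis $\mathrm{diag}(\Omega) = [a,b]$, both conditions of Definition \ref{def:diag_concentration} are satisfied with $\v = \v'$, so $\Omega$ is diagonally-determined as claimed.

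The step I expect to require the most care is the perturbation in the second paragraph: the original vector $\v$ typically fails to produce strictly positive $\ell_{\v,a}$ and $\ell_{\v,b}$ on $\Omega$ (for instance, on a centered ball these linear functions vanish at $a\mathbf{1}$ and $b\mathbf{1}$), so Theorem \ref{thm:Putinar} does not apply to $\v$ directly. The scaling $\v \mapsto (1-\delta)\v$ is exactly what injects a uniform positive margin on $\Omega$, at the modest price of an additive $\delta$ in the objective that can then be sent to zero to close the argument.
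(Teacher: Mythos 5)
Your proof is correct and follows essentially the same route as the paper's: feasibility of $\v'=\mathbf{0}$ at level $k=0$ for the bound $\rho_0\le 1$, monotonicity from the nesting of the truncated quadratic modules, and the rescaling $(1-\delta)\v$ combined with Putinar's Positivstellensatz (Theorem \ref{thm:Putinar}) to get convergence to zero. You are in fact slightly more thorough than the paper, which leaves the lower bound $\rho_k\ge 0$ (your evaluation of $\ell_{\v',b}$ at $b\mathbf{1}$) and the final certification step implicit.
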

\begin{proof}
For  $k = 0$, a feasible solution of \eqref{detection-v-relax} is given by $\v' = \mathbf{0}$, since $a < 0 < b$,
so that $\rho_0 \le 1$. Consequently,  $\rho_k \in [0,1]$ for all $k$, since $\mathcal{Q}(g_1,\ldots,g_m)_k \subseteq \mathcal{Q}(g_1,\ldots,g_m)_{k+1}$ for all $k \in \mathbb{N}_0$.
Assume now that $\Omega$ is diagonally-determined with vector $\v$ and diagonal $[a,b]$.
Then, for any $\varepsilon > 0$ and $\x \in \Omega$, one has
\[
\langle (1-\varepsilon)\v,\x\rangle \in [(1-\varepsilon)a,(1-\varepsilon)b] \subset (a,b),
\]
where we again use that $a < 0 < b$.
Thus,
$\x \mapsto \langle (1-\varepsilon)\v,\x\rangle -a \in \mathcal{Q}(g_1,\ldots,g_m)$ and
$\x \mapsto b-\langle (1-\varepsilon)\v,\x\rangle \in \mathcal{Q}(g_1,\ldots,g_m)$, by Theorem \ref{thm:Putinar}.
This implies that, for sufficiently large $k^* \in \mathbb{N}_0$, $\v' = (1-\varepsilon)\v$ is feasible for \eqref{detection-v-relax} with $k = k^*$.
Thus $\rho_{k^*} \le 1- \langle(1-\varepsilon)\v,\mathbf{1}\rangle = \varepsilon$,
and therefore the sequence $\{\rho_k\}$ is non-increasing, and converges  to zero.
\end{proof}

\begin{exam}
It  is insightful to see how the above procedure works in the case of the unit ball in the $\infty$-norm, i.e.\
\[
\Omega = \{\x \in \mathbb{R}^d \; : \; \|\x\|_\infty \le 1\}.
\]
By Proposition \ref{prop:ball}, and its proof, we know that $\Omega$ is diagonally-determined, with diagonal $[a,b] = [-1,1]$ and vector $\v = \frac{1}{d}\mathbf{1}$.
We may describe $\Omega$ as a semi-algebraic set in the form \eqref{set-omega} by using the polynomials $g_j(\x) = 1 - x_j^2$ for $j=1,\ldots,d$, i.e.\ $m = d$.
The next step is to find the diagonal of $\Omega$ by finding the roots of the univariate polynomials
$p_j(t) := g_j(t \mathbf{1})= 1-t^2$ for $j=1,\ldots,d$. These polynomials are the same for all $j$ and nonnegative on the interval $[-1,1]$, so that we
obtain $[a,b] = [-1,1]$.
The next step is to solve \eqref{detection-v-relax} for increasing values of $k\in \mathbb{N}$.
Using the identity
\[
 1 \pm x  = \frac{1}{2} \left( (1 \pm x)^2 + 1 - x^2 \right), \quad x \in \mathbb{R},
\]
one obtains that
\[
b - \langle \v,\x\rangle = 1 - \frac{1}{d}\sum_{j=1}^d x_j = \frac{1}{d}\sum_{j=1}^d\left(1- x_j\right) = \frac{1}{2d}\sum_{j=1}^d \left((1 - x_j)^2 + g_j(\x) \right).
\]
Similarly one has
\[
\langle \v,\x\rangle - a = \frac{1}{2d}\sum_{j=1}^d \left((1 + x_j)^2 + g_j(\x) \right) \in \mathcal{Q}(g_1,\ldots,g_d)_2.
\]
Thus, for $k=2$, we find that $\v'= \v$ is feasible in \eqref{detection-v-relax}, so that $\rho_2 = 0$.
\end{exam}
In the example we have finite convergence of the sequence $\{\rho_k\}$, but it is an open question when this occurs.
There are known sufficient conditions for the finite convergence of Lasserre-type semidefinite programming hierarchies, e.g.\ \cite{Nie},
but these are not satisfied for our problem in general.

\section{Conclusion and discussion}
\label{sec:conclusions}
\rev{
The extremal problem \eqref{def-intro-multi-2} we considered in this paper relies on a specific generalization of the concept of a monic univariate polynomial to the multivariate case.
In particular, in the multivariate case we require that the leading coefficients (in the standard monomial basis) sum to one, leading to the introduction of the set $\Pi^*_n$ in Definition \ref{def:extremal_problem}. There are other natural choices, like also requiring the leading coefficients to be nonnegative.
Having said that, the least Chebyshev polynomial in Theorem \ref{thm:supported} already has
nonnegative leading coefficients for many diagonally-determined sets, e.g.\ the unit ball in a monotone norm $\|\cdot\|$.
To see this, let $\Omega$ be such a unit ball, centered at the origin.
By Proposition \ref{prop:ball}, $\Omega$ is then diagonally-determined with diagonal $[a,b] = [-1/\|\mathbf{1}\|,1/\|\mathbf{1}\|]$ and allows an entrywise-nonnegative vector $\v$.
By Theorem \ref{thm:supported}, the least Chebyshev polynomial in this case is given by
\[
 \x\mapsto P^*(\x) = \left(\frac{1}{\|\mathbf{1}\|}\right)^n \frac{1}{2^{n-1}}   T_n\left(\|\mathbf{1}\|\langle \v,\x\rangle\right).
\]
Since $\frac{1}{2^{n-1}}T_n$ is a monic univariate polynomial of degree $n$, the leading coefficients of the least Chebyshev polynomial $P^*$ are therefore
obtained from the multinomial expansion of $\x \mapsto \langle \v,\x\rangle^n$. Since $\v$ is entrywise-nonnegative by assumption, it follows that these leading coefficients are also nonnegative.
In this sense our generalization of monic polynomials to the multivariate case is a natural one, but is remains an interesting question which diagonally-determined sets allow a entrywise-nonnegative vector $\v$.
}

\rev{
A second point to mention, is that we have not explored the geometric and invariance properties of diagonally-determined
 sets to any great extent in this paper, and this deserves to be investigated further.
 The same may be said for the decision problem of whether a given set is diagonally-determined.
  If $\Omega$ is a compact semi-algebraic set, one may formulate an algorithmic procedure to decide this question, as we did in Section \ref{sec:detection}.
   An interesting special case is the (complexity) question: may one decide in polynomial time if a given polyhedron is diagonally-determined?
}

\rev{
Finally, we note that our main results  in Theorems \ref{thm:supported} and \ref{thm:sinature_diag_supp} are relatively straightforward to prove, once the correct formulations have been discovered.
We `discovered' these results by doing numerical experiments on least Chebyshev polynomials for the Euclidean unit ball, using the tools
described in our earlier paper \cite{caltex-1}. Thus we observed the structure of the atomic dual solution in Theorem \ref{thm:sinature_diag_supp}, and we could infer the main results from there.
In the immortal words of Bernhard Riemann:\footnote{As quoted in I.\ Lakatos, \emph{Proofs and Refutations} (Cambridge University Press, 1976).} \begin{quote}
{\em If only I had the theorems! Then I should find the proofs easily enough.}
\end{quote}
}

\subsection*{Acknowledgments}
 {This work is the result of a collaboration made possible by the SQuaRE program at the American Institute of Mathematics (AIM).
We are truly grateful to AIM for
the supportive and mathematically rich environment they provided.
In addition, we owe thanks to several funding agencies,
as
M. D. is supported by the Australian Research Council Discovery Early Career Award DE240100674,
S. F. is partially supported by grants from the National Science Foundation (DMS-2053172) and from the Office of Naval Research (N00014-20-1-2787),
E. de K. is supported by grants from the Dutch Research Council (NWO)
(OCENW.M.23.050 and OCENW.GROOT.2019.015),
J. B. L. is supported by the AI Interdisciplinary Institute  through the French program ``Investing for the Future PI3A" (ANR-19-PI3A-0004)
and by the National Research Foundation, Singapore,
through the DesCartes and Campus for Research Excellence and Technological Enterprise (CREATE) programs,
and Y. X. is partially supported by the Simons Foundation (grant \#849676).}


\begin{thebibliography}{squares}
 \bibitem{AV1}
N. N. Andreev and V. A. Yudin, Polynomials of least deviation from zero, and Chebyshev-type
cubature formulas,
\textit{Tr. Mat. Inst. Steklova} \textbf{232} (2001), 45--57.

\bibitem{AV2}
N. N. Andreev and V. A. Yudin,
Best approximation of polynomials on the sphere and on the ball,
in {\it Recent progress in multivariate approximation}, 23--30; Internat. Ser. Numer. Math., \textbf{137},
Birkh{\"a}user, Basel, 2001.

%\bibitem{Barker1971}
%G.P. Barker, Monotone norms. \textit{Numer. Math.} \textbf{18} (1971), 321--326, 1971.

\bibitem{BHN}
 B. D. Bojanov, W. Haussmann, and G. P. Nikolov,
 Bivariate polynomials of least deviation from zero,
 \textit{Canad. J. Math.}  \textbf{53} (2001), 489--505.
\rev{
\bibitem{caltex-1}
 M.  Dressler, S. Foucart, E. de Klerk, M.  Joldes, J.~B. Lasserre,
 Y. Xu. \emph{Optimization-Aided Construction of Multivariate Chebyshev Polynomials}, \emph{J. Approx. Theory},
\textbf{305} (2025), 106116.
}

%\rev{
%\bibitem{Faber}
%G. Faber. \"{U}ber Tschebyscheffsche Polynome. \emph{Journal f\"{u}r die reine und angewandte
%Mathematik}, \textbf{150} (1919), 79--106.
%}

\bibitem{G}
 W. B. Gearhart,
 Some Chebyshev approximations by polynomials in two variables,
 \textit{J. Approx. Theory}, \textbf{8} (1973), 195--209.


\bibitem{lass-siopt-01}
J.B. Lasserre.
{\em Global optimization with polynomials and the problem of moments}. {SIAM Journal on Optimization} \textbf{11} (2001), 796--817.


\bibitem{NX}
 D. J. Newman and Y. Xu,
 Tchebycheff polynomials on a triangular region,
 \textit{Constr. Approx.} \textbf{9} (1993), no. 4, 543--546.

 \bibitem{Nie}
 J. Nie. Optimality conditions and finite convergence of Lasserre’s hierarchy. \emph{Math. Program.} \textbf{146}  (2014), 97--121.

\bibitem{MoaPeh}
I. Moale and F. Peherstorfer,
An explicit class of min–max polynomials on the ball and on the sphere,
\textit{J. Approx. Theory} \textbf{163} (2011), no. 6, 724--737.


\bibitem{Put}
M. Putinar.
{\em Positive polynomials on compact semi-algebraic sets.}
Indiana University Mathematics Journal, \textbf{42}/3 (1993), 969--984.


\bibitem{R}
 M. Reimer,
 On multivariate polynomials of least deviation from zero on the unit ball,
 \textit{Math. Z}, \textbf{153} (1977), no. 1, 51--58.

\bibitem{Riv}
T. J.  Rivlin.
Chebyshev Polynomials.
Second edition.
Courier Dover Publications, 2020.

\bibitem{RS}
T. J. Rivlin and H. S. Shapiro,
A unified approach to certain problems of approximation and minimization,
\textit{J. Soc. Indust. Appl. Math.} \textbf{9} (1961), 670--699.

\bibitem{Shapiro2001}
A. Shapiro. {On duality theory of conic linear problems}, in {\it Semi-Infinite Programming: Recent Advances} (M.\'{A}. Goberna and M.A. L{\'o}pez,
  eds.), Springer, 135--165 (2001).


\bibitem{Sloss}
J. M. Sloss,
Chebyshev approximation to zero,
\textit{Pacific J. Math.} \textbf{15} (1965), 305--313.

\bibitem{X04}
Y. Xu,
On polynomials of least deviation from zero in several variables,
\textit{Experiment. Math.} \textbf{13} (2004), no. 1, 103--112.

\bibitem{X05}
Y. Xu,
Best approximation of monomials in several variables,
{\it Rend. Cir. Mat. Palermo (2) Suppl.} (2005), no. 76, 129--155.
\rev{
\bibitem{Yudin}
V. A. Yudin, Best approximation to monomials on a cube, \emph{Matematicheskii Sbornik}
\textbf{199}, (2008), 149--160.
}
\end{thebibliography}
\end{document}